\documentclass[12pt]{amsart}


\oddsidemargin=0in
\evensidemargin=0in
\topmargin=-.5in
\textheight=8.5in
\textwidth=6.5in


\usepackage[hidelinks]{hyperref} 

 \usepackage{amsaddr}
\usepackage{amsmath}
\usepackage{amsfonts}
\usepackage{amssymb}
\usepackage{latexsym}
\usepackage{graphicx}
\usepackage{enumitem}
\usepackage{multirow}
\usepackage{pgfplots}
\usepackage{subcaption}


\numberwithin{equation}{section}
\newtheorem{theorem}{Theorem}[section]

\newtheorem{lemma}[theorem]{Lemma}

\newtheorem{remark}[theorem]{Remark}
\newtheorem{definition}[theorem]{Definition}


\usepackage{mathtools}

\makeatletter
\DeclareRobustCommand\widecheck[1]{{\mathpalette\@widecheck{#1}}}
\def\@widecheck#1#2{%
\setbox\z@\hbox{\m@th$#1#2$}%
\setbox\tw@\hbox{\m@th$#1%
\widehat{%
\vrule\@width\z@\@height\ht\z@
\vrule\@height\z@\@width\wd\z@}$}%
\dp\tw@-\ht\z@
\@tempdima\ht\z@ \advance\@tempdima2\ht\tw@ \divide\@tempdima\thr@@
\setbox\tw@\hbox{%
\raise\@tempdima\hbox{\scalebox{1}[-1]{\lower\@tempdima\box
\tw@}}}%
{\ooalign{\box\tw@ \cr \box\z@}}}
\makeatother


\newcommand{\ep}{\epsilon}
\renewcommand{\hat}{\widehat}

\DeclareMathOperator{\sinc}{sinc}
\renewcommand{\tilde}{\widetilde}

\newcommand{\be}{\begin{equation}}
\newcommand{\ee}{\end{equation}}

\newcommand{\bes}{\begin{equation*}}
\newcommand{\ees}{\end{equation*}}

\newcommand{\bunderbrace}[2]{%
\begin{array}[t]{@{}c@{}}
\underbrace{#1}\\
#2
\end{array}
}

\newcommand{\mand}{\quad \text{and}\quad}

\renewcommand{\H}{{\mathcal{H}}}
\renewcommand{\L}{{\mathcal{L}}}
\newcommand{\N}{{\mathbf{N}}}
\renewcommand{\O}{{\mathcal{O}}}
\newcommand{\R}{{\mathbf{R}}}
\newcommand{\Z}{\mathbf{Z}}

\graphicspath{{Images/}}

\newcommand{\II}{I\!I}


\usepackage{tikz}
\usetikzlibrary{positioning}
\usetikzlibrary{patterns}
\usetikzlibrary{shapes}
\usetikzlibrary{matrix, arrows,decorations.pathmorphing}
\usetikzlibrary{decorations}
\usetikzlibrary{decorations.markings}
\usetikzlibrary{fadings}
\usetikzlibrary{arrows.meta,bending}
\usepackage{tikzscale}


\title{Solitary Waves in Mass-in-Mass Lattices}
\author{Timothy E. Faver}\address{Leiden University\\ \tt{t.e.faver@math.leidenuniv.nl}}
\author{Roy H. Goodman}\address{NJIT \\ \tt goodman@njit.edu}
\author{J. Douglas Wright}\address{Drexel University\\ {\tt{jdw66@drexel.edu}}\\ (corresponding author)}

\begin{document}
\maketitle
\begin{abstract} We consider the existence of spatially localized traveling wave solutions of the mass-in-mass lattice. Under an anti-resonance condition first
discovered by Kevrekidis, Stefanov and Xu, we prove that such solutions exist in two distinguished limits, the first where the mass of the internal resonator is small and the second where the internal spring is very stiff. We then numerically simulate the solutions and these simulations indicate that the anti-resonant traveling waves are {\it very} weakly unstable.

\end{abstract}

\section{Introduction}

\subsection{The traveling wave problem in mass-in-mass lattices}\label{tw problem intro}
We study the existence of solitary waves in mass-in-mass (MiM) lattices.
An MiM lattice consists of an infinite chain of particles, called beads, each coupled to its two nearest neighbors, and also coupled to an additional resonator particle.
See Figure~\ref{fig-mim} for a sketch of the particular MiM model that we will consider.

MiM lattices belong to a broad class of artificially constructed materials called granular metamaterials, which are prized in experimental settings for their simple structure and highly tunable properties; one can vary the masses of the beads and/or the resonators and adjust both the spring forces governing the beads' nearest-neighbor interaction as well as the bead-resonator coupling, thereby allowing a range of interaction strengths from the  purely linear to the highly nonlinear~\cite{nesterenko}.
Physically-constructed MiM lattices have served in a diverse array of experiments, such as constructing sensors for bone elasticity~\cite{bones} and ultrasonic scans~\cite{spadoni}, determining of the setting time for cement~\cite{cement}, modeling switches and logic gates~\cite{li}, and modeling vibration absorption in composite materials~\cite{bonanomi-et-al, gantzounis}.

The MiM lattices in this paper have the following form.
All of the beads are identical and are normalized to have mass 1.
We index the beads by integers $j \in \Z$.
The springs connecting the beads are also identical and all have the same potential $V \in C^1(\R)$; we will add some further hypotheses on $V$ later.
Likewise, the resonators are identical and have mass $\mu > 0$.
The bead-resonator spring is linear and exerts the force $\kappa{r}$ when stretched a distance $r$; here $\kappa > 0$ is fixed.
If we denote by $U_j$ the position of the center of mass of the $j$th bead and $u_j$ the position of the center of mass of the $j$th bead's resonator, then Newton's second law requires $U_j$ and $u_j$ to satisfy the following system:
\be\label{UrMiM}
\begin{cases}
\ddot{U}_j = V'(U_{j+1}-U_j)-V'(U_j-U_{j-1})+\kappa(u_j-U_j) \\
\\
\mu\ddot{u}_j = \kappa(U_j-u_j).
\end{cases}
\ee

\begin{remark}
Without loss of generality, we may assume $V'(0) = 0$, for if not we can put $V(r) = V'(0)r + W(r)$ with $W(r)$ implicitly defined. 
Making this change in~\eqref{UrMiM} simply replaces each $V$ with $W$ and we have $W'(0) = 0$.
\end{remark}

We work, as is typical in lattice models, in the relative displacement coordinates
\[
R_j := U_{j+1} - U_j
\mand
r_j := u_j-U_j.
\]
Then we make the following traveling wave ansatz:
\begin{equation}\label{ansatz}
R_j(t) = \rho_1(j-ct+1/2)
\mand
r_j(t) = \rho_2(j-ct).
\end{equation}
Here $\rho_1 = \rho_1(x)$ and $\rho_2 = \rho_2(x)$ are the traveling wave profiles and $c \in \R$ is the wave speed.
The shift by $1/2$ in $\rho_1$ induces a convenient symmetry we exploit at a later point. In any case,  $\rho_1$ and $\rho_2$ satisfy
\begin{subequations}\label{traveling}%
\be\label{i}
c^2 \rho_1'' - \delta^2 V'(\rho_1) - \kappa \delta \rho_2 = 0
\ee
\be\label{ii}
c^2 \mu \rho_2'' + \kappa(1+\mu) \rho_2 + \mu \delta V'(\rho_1)= 0.
\ee
\end{subequations}
Here $\delta$ is the linear difference operator
\begin{equation}\label{delta}
\delta f (x):=f(x+1/2)-f(x-1/2). 
\end{equation}

\begin{figure}\label{lattice fig}

\begin{subfigure}{1\textwidth}
\centering

\scalebox{.8}{
\begin{tikzpicture}[very thick]

\begin{scope}

\def\coilspan{2.7}; 
\def\h{.65}; 
\def\edgespan{.25}; 

\def\beadrad{1.5}; 
\def\innerad{1.3}; 

\def\M{.4}; 
\def\N{.55}; 
\def\E{.35}; 


\def\coil#1{ 
{\N+\M*\t+\E*sin(\t*pi r)+#1},
{\h*cos(\t*pi r)}
}


\def\NLSPRING#1{
\draw[line width=1.5pt,domain={-.5:4.5},smooth,variable=\t,samples=100]
(#1,0)--plot(\coil{#1+\edgespan})
--(#1+2*\edgespan+\coilspan,0);
}

\def\d{.2}; 


\def\BEAD#1{
\shade[ball color=blue,opacity=.8] (#1,0) circle(\beadrad);
\fill[white] (#1,0) circle(\innerad);
\draw[very thick] (#1,0) circle(\beadrad);
\draw[very thick] (#1,0) circle(\innerad);
}

\def\p{.15625}; 


\def\RESONATOR#1{
\draw
(#1+\d,0)
--(#1+2*\d,0)
--(#1+2*\d+\p,.2)
--(#1+2*\d+3*\p,-.2)
--(#1+2*\d+4*\p,0)
--(#1+2*\d+4*\p+.2,0);

\draw[rounded corners,fill=yellow] (#1+2*\d+4*\p+.2,.25) rectangle (#1+2*\d+4*\p+.2+.55,-.25);

\draw (#1+2*\d+4*\p+.2+.55,0)--(#1+2*\d+4*\p+.2+.55+.2,0) --(#1+2*\d+4*\p+.2+.55+.2+\p,.2)
--(#1+2*\d+4*\p+.2+.55+.2+3*\p,-.2)
--(#1+2*\d+4*\p+.2+.55+.2+4*\p,0)
--(#1+2*\d+4*\p+.2+.55+.2+4*\p+.2,0);
};

\clip (\beadrad+\edgespan+.5*\coilspan,4) rectangle (7*\beadrad+7*\edgespan+3.5*\coilspan,-2);

\BEAD{0};

\NLSPRING{\beadrad};
\BEAD{2*\beadrad+2*\edgespan+\coilspan};
\RESONATOR{1*\beadrad+2*\edgespan+\coilspan};
\draw[densely dotted] (2*\beadrad+2*\edgespan+\coilspan,\beadrad)--(2*\beadrad+2*\edgespan+\coilspan,\beadrad+.75);

\NLSPRING{3*\beadrad+2*\edgespan+\coilspan};
\BEAD{4*\beadrad+4*\edgespan+2*\coilspan};
\RESONATOR{3*\beadrad+4*\edgespan+2*\coilspan};
\draw[densely dotted] (4*\beadrad+4*\edgespan+2*\coilspan,\beadrad)--(4*\beadrad+4*\edgespan+2*\coilspan,\beadrad+.75);

\NLSPRING{5*\beadrad+4*\edgespan+2*\coilspan};
\BEAD{6*\beadrad+6*\edgespan+3*\coilspan};
\RESONATOR{5*\beadrad+6*\edgespan+3*\coilspan};
\draw[densely dotted] (6*\beadrad+6*\edgespan+3*\coilspan,\beadrad)--(6*\beadrad+6*\edgespan+3*\coilspan,\beadrad+.75);

\NLSPRING{7*\beadrad+6*\edgespan+3*\coilspan};

\draw[decoration={brace, amplitude = 10pt},decorate] (2*\beadrad+2*\edgespan+\coilspan,\beadrad+.75)--node[midway,above=7pt]{\scalebox{1.25}{$R_j$}}(4*\beadrad+4*\edgespan+2*\coilspan,\beadrad+.75);
\draw[decoration={brace, amplitude = 10pt},decorate] (4*\beadrad+4*\edgespan+2*\coilspan,\beadrad+.75)--node[midway,above=7pt]{\scalebox{1.25}{$R_{j+1}$}}(6*\beadrad+6*\edgespan+3*\coilspan,\beadrad+.75);

\end{scope}
\end{tikzpicture}
}

\caption{A snippet of the MiM lattice}
\end{subfigure}

\bigskip

\begin{subfigure}{1\textwidth}
\[
\begin{tikzpicture}

\shade[ball color=blue,opacity=.8]
(3,0) circle(3);
\fill[white] (3,0) circle(2.6);

\def\d{.2};
\def\h{.4}
\draw[very thick] (3,0) circle(3);
\draw[very thick] (3,0) circle(2.6);

\draw[very thick] 
(.4,0)
--(.4+\d,0)
--(.4+2*\d,\h)
--(.4+3*\d,-\h)
--(.4+4*\d,0)
--(.4+5*\d,0);

\fill[rounded corners=5, very thick,yellow] (.4+5*\d,.5) rectangle (.4+5*\d+1.1,-.5);
\node at (.4+5*\d+.55,0){$\mu$};
\draw[rounded corners=5, very thick] (.4+5*\d,.5) rectangle (.4+5*\d+1.1,-.5);

\draw[very thick,densely dotted] (.4+5*\d+.55,.5)--(.4+5*\d+.55,1.55)--(3,1.55)--(3,2.6);

\draw[decoration={brace, amplitude = 7pt,mirror},decorate,very thick] (.4+5*\d+.55,1.55)--node[midway,below=5pt]{$r_j$}(3,1.55);

\def\r{1.5};

\draw[very thick]
(\r+5*\d,0)
--(\r+6*\d,0)
--(\r+7*\d,\h)
--(\r+8*\d,-\h);

\def\s{.575};

\draw[very thick] (\r+8*\d,-\h)
--(\r+8*\d+\s,\h)
--(\r+8*\d+2*\s,-\h)
--(\r+8*\d+3*\s,\h)
--(\r+8*\d+4*\s,0)
--(\r+9*\d+4*\s,0);

\end{tikzpicture}
\]
\caption{A close-up of one bead-resonator pair}
\end{subfigure}

\caption{The mass-in-mass lattice}
\label{fig-mim}
\end{figure}

\subsection{Distinguished material limits}\label{dist}
We will look for solitary traveling wave solutions to~\eqref{traveling}.
That is, we seek solutions $\rho_1$ and $\rho_2$ that vanish exponentially fast at infinity in the sense that $\cosh(\cdot)^b\rho_j \in H^s(\R)$ for some $b > 0$.
We find that such solitary traveling waves exist for special choices of the wave speed $c$, the resonator mass $\mu$, and the bead-resonator spring constant $\kappa$, corresponding to two different ``material limits'' in the lattice, which we now describe.

\subsubsection{Small resonator limit}\label{small res limit}
We fix the traveling wave speed $c$ and the bead-resonator spring constant $\kappa$ and send $\mu \to 0$.
At $\mu = 0$ we have removed the resonator entirely and reduced the MiM lattice to a monatomic Fermi-Pasta-Ulam-Tsingou (FPUT) lattice~\cite{fput-original, dauxois} with nearest-neighbor interaction potential given by $V$.

This is naturally reflected in the traveling wave equations.
Set $\mu=0$ in~\eqref{ii} to see that $\rho_2 = 0$ is a solution, and then the first equation~\eqref{i} reads simply 
\begin{equation}\label{original monatomic fput eqn}
c^2 \rho_1'' - \delta^2 V'(\rho_1) 
= 0.
\end{equation}
This is the governing equation for traveling wave solutions of monatomic FPUT.
There are a host of different sorts of solutions to this equation, depending, in particular, on how the potential $V$ is chosen.
We are especially interested in solitary wave solutions like those developed by Friesecke and Wattis in~\cite{friesecke-wattis} and Friesecke and Pego in~\cite{friesecke-pego1}; see Section~\ref{Main Results} for further comments on the lattices in these papers.
Do these solitary waves persist for small $\mu > 0$?

\subsubsection{Stiff internal spring limit}\label{stiff sub}
We again fix the traveling wave speed but now also fix the resonator mass $\mu$ and send $\kappa \to \infty$.
A system with completely stiff internal springs corresponds to putting ``$\kappa = \infty$'' in~\eqref{traveling}. 
As in the previous special case, this should reduce the problem to a standard FPUT lattice wherein the mass of the particles is the sum of that of the outer shells and inner resonators, which is $1+\mu$.

To see this at the level of~\eqref{traveling}, put $\rho_3 = \kappa \rho_2$ so that~\eqref{traveling} become
\begin{subequations}\label{traveling-scaled}%
\be\label{iii}
c^2 \rho_1'' - \delta^2 V'(\rho_1)-  \delta \rho_3 
= 0
\ee
\be\label{iv}
c^2 \mu \kappa^{-1} \rho_3'' + (1+\mu) \rho_3 + \mu \delta V'(\rho_1)
= 0.
\ee
\end{subequations}
Then letting $\kappa = \infty$, we see that~\eqref{iv} implies that 
\[
\rho_3 
= -\frac{\mu}{1+\mu}\delta V'(\rho_1).
\] 
Substituting this into~\eqref{iii} then gets us
\[
c^2 \rho_1'' - \delta^2 V'(\rho_1)+\left(\frac{\mu}{1+\mu}\right)\delta^2 V'(\rho_1)
= 0
\]
or rather, after some easy algebra,
\[
(1+\mu) c^2 \rho_1'' - \delta^2 V'(\rho_1)
=0.
\]
This is, again, is the governing equation for traveling wave solutions of the monatomic FPUT lattice, though now the mass of the particles is, as expected, $1+\mu$. 
Do solutions of these equations persist when $\kappa$ is merely huge?

\subsection{Existing results on MiM traveling waves}
Our motivation for studying MiM lattices, and in particular the incarnation of the lattice discussed in Section~\ref{tw problem intro}, comes from a variety of recent results on MiM traveling waves, which we now summarize briefly.
We first observe that the majority of these results use a Hertzian potential to govern the nearest-neighbor interaction $V$ of the beads.
That is, for $r \in \R$ set
\[
[r]_+
:= \begin{cases}
r, &r \ge 0 \\
0, &r < 0
\end{cases}
\]
and, for some $\ep_0 \ge 0$ and $p >1$, take the potential $V$ to satisfy
\begin{equation}\label{Hertzian defn}
V'(r)
= [\ep_0+r]_+^p.
\end{equation}
The case $\ep_0 > 0$ corresponds to a ``precompressed'' lattice.

There are a number of results on solitary waves in monatomic FPUT lattices with Hertzian nearest-neighbor interactions.
The traveling wave equation for these lattices is just~\eqref{original monatomic fput eqn} with $V$ given by~\eqref{Hertzian defn}.
In the absence of precompression ($\ep_0 = 0$), English and Pego~\cite{english-pego} prove that all traveling wave solutions to this problem that vanish at infinity do so exponentially fast; the existence of such traveling wave solutions is a straightforward consequence of the solitary wave theory of Friesecke and Wattis~\cite{friesecke-wattis}.
Stefanov and Kevrekidis have considered both the Hertzian monatomic lattice without precompression~\cite{sk12} and with precompression~\cite{sk13}.
In each case they show that the solitary waves are non-decreasing on $(-\infty,0)$ and non-increasing on $(0,\infty)$; in the latter paper allowing precompression, they obtain solitary waves for wave speeds sufficiently large, depending on the constants $\ep_0$ and $p$ from~\eqref{Hertzian defn}.

The situation for mass-in-mass lattices with Hertzian nearest-neighbor interactions between the beads is somewhat different.
An ample body of numerical evidence~\cite{kev-vain-et-al, woodpile, ksx, xu, vorotnikov-et-al} indicates that the generic traveling wave solution for a Hertzian mass-in-mass lattice (i.e, the solutions to~\eqref{traveling} with $V$ satisfying~\eqref{Hertzian defn} for some $\ep_0$ and $p$) is not a solitary wave but a nanopteron.
The nanopteron traveling wave profile does not vanish at infinity like the solitary wave but instead asymptotically approaches a small-amplitude, high-frequency periodic oscillation~\cite{boyd}.

The existence of nanopterons in a different lattice model, the diatomic FPUT lattice, has recently been confirmed under a variety of different material limits.
The diatomic FPUT lattice consists of an infinite chain of particles of alternating masses connected by identical springs whose potential $V$, after normalization, satisfies
\begin{equation}\label{FPUT potential}
V'(r) = r + r^2.
\end{equation}
Faver and Wright~\cite{faver-wright} found nanopteron traveling waves in the KdV-long wave limit, and Hoffman and Wright~\cite{hoffman-wright} constructed them for the small mass limit; Faver~\cite{faver-spring-dimer} has also constructed nanopterons for the related long wave limit for FPUT lattices with alternating springs and constant masses. 
Faver and Hupkes~\cite{faver-hupkes} found a related breed of ``generalized'' or ``nonlocal'' solitary waves, called micropterons, in diatomic FPUT lattices in the equal mass limit.
All of these FPUT articles rely on an ansatz initially proposed by Beale~\cite{beale91} for a capillary-gravity water wave problem; later, Amick and Toland~\cite{amick-toland} adapted Beale's ansatz for a singularly perturbed KdV-type ODE, and recently Johnson and Wright~\cite{johnson-wright} applied it to a singularly perturbed Whitham equation.
We note that nanopteron traveling waves are not limited to FPUT lattices with only nearest neighbor interaction.
Iooss and Kirchg\"{a}ssner~\cite{ik} combine the techniques of spatial dynamics, center manifold theory, normal form analysis, and oscillatory integral analysis due to Lombardi~\cite{lombardi-tome} to find nanopterons in a monatomic lattice with an on-site potential, while Venney and Zimmer~\cite{venney-zimmer} use related methods to construct nanopterons in a monatomic lattice with nearest and next-to-nearest neighbor interactions.
Jayaprakash, Valakis and Starosvetsky~\cite{jayaprakash-et-al} study the so-called the $1:N$ dimer lattice, which is formed by alternating ``heavy'' beads with a succession of $N \ge 1$ ``light'' beads (thus the $1:1$ lattice is diatomic); here, the particle interaction is Hertzian without precompression, and they obtain, for special values of their material parameters, solitary waves, and more generally ``near-solitary'' waves with oscillatory tails.

The primary focus of Kevrekidis, Stefanov and Xu in~\cite{ksx}, however, is not the intriguing numerical evidence that they present for nanopterons in Hertzian MiM lattices but rather a construction of solitary waves for special ``antiresonance'' values of the wave speed $c$ and resonator mass $\mu$.
(They elect to normalize $\kappa=1$.)
That is, given a wave speed $c$, they find a countable number of mass values for which the Hertzian MiM lattice bears genuine solitary waves.
In the next section, we re-derive this antiresonance condition.

\subsection{The Kevrekidis-Stefanov-Xu antiresonance condition revisited}
Recall that we are interested in solutions of the traveling wave system~\eqref{traveling} which are spatially localized in the sense that they tend to zero as $|x| \to\infty$. 
We note that~\eqref{ii} is nothing more than the equation of a driven simple harmonic oscillator with ``natural frequency'' 
\begin{equation}\label{omega-defn}
\omega_{c,\mu,\kappa}
:=\sqrt{ \frac{\kappa(1+\mu)}{c^2 \mu}}.
\end{equation}

So, if we specify zero initial conditions at $x = -\infty$, variation of parameters implies
\be\label{vop}
\rho_2(x)
=-\frac{1}{c^2 \omega_{c,\mu,\kappa}} \int_{-\infty}^x \sin(\omega_{c,\mu,\kappa} (x-y)) \delta V'(\rho_1)(y) dy.
\ee
We rewrite this as
\begin{equation}\label{rho2 via VoP}
\rho_2(x) 
= -\frac{1}{c^2 \omega_{c,\mu,\kappa}} \Im \left(e^{i \omega_{c,\mu,\kappa} x} \int_{-\infty}^x e^{-i\omega_{c,\mu,\kappa} y} \delta V'(\rho_1)(y) dy\right).
\end{equation}
If we are to have $\rho_2(x) \to 0$ as $x \to \infty$, we must have
$$
\int_{-\infty}^\infty e^{-i\omega_{c,\mu,\kappa} y} \delta V'(\rho_1)(y) dy 
= 0.
$$
Thus, necessarily,
\be\label{arc1}
\hat{ \delta V'(\rho_1)}(\omega_{c,\mu,\kappa}) 
= 0.
\ee
This says the driver of the simple harmonic oscillator has no spectral content at the natural frequency; as such~\eqref{arc1} represents
an ``antiresonance condition.'' 

Meeting~\eqref{arc1} seems like a tall order since, in advance, we do not know $\rho_1$. 
But notice that $\delta$, defined in~\eqref{delta}, is a Fourier multiplier operator with symbol $2i \sin(\xi/2)$. 
Thus~\eqref{arc1} reads
\be\label{arc1 fourier}
\sin\left({\omega_{c,\mu,\kappa}}/{2}\right) \hat{V'(\rho_1)}(\omega_{c,\mu,\kappa}) 
= 0
\ee
and we see that~\eqref{arc1 fourier} is satisfied, no matter what $\rho_1$ may be, if 
\be\label{KSX}
\omega_{c,\mu,\kappa}
= \sqrt{ \kappa(1+\mu)  \over c^2 \mu}
= 2 n \pi,\quad n \in \N.
\ee
This simple algebraic relation among $c$, $\mu$ and $\kappa$ turns out to be equivalent to the antiresonance condition in~\cite{ksx} (recalling that $\kappa=1$ in that paper).

\subsection{The antiresonance-nanopteron connection}
Motivated by the successful FPUT small mass limit of Hoffman and Wright~\cite{hoffman-wright} and the numerical evidence for Hertzian MiM nanopterons discussed above, Faver studies the small resonator limit in FPUT-MiM lattices in the forthcoming paper~\cite{faver-mim}.
These are MiM lattices in which the nearest-neighbor interaction of the beads is given by~\eqref{FPUT potential}, which itself is a special case of the potentials that we consider in this paper.
In this limit, as in Section~\ref{small res limit}, the wave speed $c$ and the bead-resonator spring constant $\kappa$ are fixed, and the resonator mass $\mu$ is sent to 0.
One can view the FPUT potential~\eqref{FPUT potential} as, roughly, the Taylor expansion of the Hertzian potential with precompression, i.e., taking $\ep_0 > 0$ in~\eqref{Hertzian defn}.

Using Beale's nanopteron ansatz and methods similar to those in~\cite{hoffman-wright}, one can construct nanopteron traveling waves in FPUT-MiM lattices in the limit as $\mu \to 0$ for all but a ``small'' number of resonator masses $\mu$.
More precisely, one has solutions to~\eqref{traveling} of the form
\begin{equation}\label{nanos}
\rho_{1,\mu} = \varsigma_{1,\mu} + a_{\mu}\phi_{1,\mu} 
\mand
\rho_{2,\mu} = \varsigma_{2,\mu} + a_{\mu}\phi_{2,\mu},
\end{equation}
where $\varsigma_{j,\mu}$ is exponentially localized, $a_{\mu} \in \R$ and $\phi_{j,\mu}$ is periodic, with $\rho_1 = a_{\mu}\phi_{1,\mu}$ and $\rho_2 = a_{\mu}\phi_{2,\mu}$ also solving~\eqref{traveling}.
However, this nanopteron program breaks down at the countable number of $\mu$ satisfying~\eqref{KSX}, and so no information about the existence of nanopterons at those $\mu$ can be gleaned from it.
Our paper in part complements Faver's work by showing that at these $\mu$ there are genuine solitary waves.

An intriguing question for future consideration is, for $\mu$ small and not satisfying~\eqref{KSX}, whether or not $a_{\mu}$ from~\eqref{nanos} is zero.
If $a_{\mu} = 0$, then $\rho_{1,\mu}$ and $\rho_{2,\mu}$ are genuine MiM solitary waves.
In other words, are solitary waves limited to the antiresonance case, or can they be byproducts of the nanopteron method, too?

\section{Main Results}\label{Main Results}
We show that if the antiresonance condition~\eqref{KSX} is met, then, yes, the solitary waves obtained in the distinguished limits described in Section~\ref{dist} persist. 
To state these results precisely, we need a few definitions. We will largely work in the spaces
\[
H^s_b
:=\left\{f \in H^s : \cosh(\cdot)^bf \in H^s \right\}.
\]
These are Banach spaces with norm 
\[
\| f \|_{s,b}
:=\| \cosh(\cdot)^b f \|_{H^s}
\]
and they consist of functions which ``decay like $e^{-b|x|}$ as $|x| \to \infty$.''
We will also make use of the following subspaces of $H^s_b$: 
\[
E^s_b
:=H^s_b \cap \left\{ \text{even functions}\right\},
\quad
E^s_{b,0}
:= E^s_b \cap \left\{ f: \int_\R f(x) dx = 0 \right\}, \]
and
$$
O^s_b:=H^s_b \cap \left\{ \text{odd functions} \right\}.
$$
We also need ``a solution to perturb from'' and so we make the following definition.

\begin{definition}\label{main assumption} 
We say $c$ is a ``$V$-admissible wave speed'' if there exist $\beta>0$ and a nonzero function $\sigma \in E^2_{\beta}$ such that

\begin{enumerate}[label={(\roman*)}, ref={(\roman*)}]

\item\label{hypo 1}
$c^2 \sigma'' - \delta^2 V'(\sigma) = 0$. 

\item 
The operator
\be\label{H defn}
\H{f}
:= c^2f'' - \delta^2 V''(\sigma)f
\ee 
is a homeomorphism from $E^{2}_{\beta}$ onto $E^0_{\beta,0}$.
\end{enumerate}
Moreover if $V$ is smooth, then so is $\sigma$, and $\H$ is a homeomorphism from $E^{s+2}_\beta$ to $E^{s}_{\beta,0}$ for all $s\ge0$.
\end{definition}

Our  first result concerns the ``small internal resonator limit.''

\begin{theorem}\label{mass thm}
Assume that $V(r)$ is $C^5$, $V'(0) = 0$ and $c\ne 0$ is a $V$-admissible wave speed. 
Fix $\kappa > 0$ and suppose that
\be\label{mu-n defn}
\mu 
= \mu_{n} 
:= {\kappa \over 4 \pi^2 c^2 n^2  -\kappa}, \quad n \in \N
\ee
so that~\eqref{KSX} is satisfied.
Then there exist $C>0$ and $N \in \N$ such that $n \ge N$ implies the existence of unique functions $\rho_1$, $\rho_2 \in H_{\beta}^2$ with the following properties:

\begin{enumerate}[label={(\roman*)}, ref={(\roman*)}]

\item\label{mass concl 1}
$\rho_1$ and $\rho_2$ solve~\eqref{traveling}.

\item\label{mass concl 2}
$\rho_1$ is even and $\rho_2$ is odd.

\item\label{mass concl 3}
$\| \sigma - \rho_1\|_{2,\beta} \le C/n^2$ and  $\|\rho_2\|_{2,\beta} \le C/n$.
\end{enumerate}
Moreover, if $V$ is smooth then so are $\rho_1$ and $\rho_2$, and the estimates in~\ref{mass concl 3} improve  to 
$\| \sigma - \rho_1\|_{s,\beta}+ \|\rho_2\|_{s,\beta}\le C/n^2$ for any $s \ge 0$.

\end{theorem}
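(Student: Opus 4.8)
The plan is to treat the problem as a singular perturbation of the monatomic FPUT equation, exploiting that $\mu_n = \kappa/(4\pi^2 c^2 n^2 - \kappa) \to 0$ as $n \to \infty$ while $\omega_{c,\mu_n,\kappa} = 2n\pi$ by construction. At $\mu = 0$ the pair $(\sigma, 0)$ solves~\eqref{traveling} by hypothesis~\ref{hypo 1}, so I would seek $\rho_1 = \sigma + p$ with $p$ and $\rho_2$ small. The scheme has two stages: first solve the driven harmonic-oscillator equation~\eqref{ii} for $\rho_2$ as an explicit, $n$-dependent function of $\rho_1$, and then substitute into~\eqref{i} to obtain a single equation for $p$ that I solve by a contraction mapping built around the homeomorphism $\H$.

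For the first stage, I rewrite~\eqref{ii} as $(\partial_x^2 + \omega_{c,\mu_n,\kappa}^2)\rho_2 = -c^{-2}\delta V'(\rho_1)$ and pass to the Fourier side, where the unique decaying solution is $\rho_2 = T_n V'(\rho_1)$ for the multiplier operator $T_n$ with symbol
\[
m_n(\xi) = \frac{2i\sin(\xi/2)}{c^2(\xi^2 - \omega_{c,\mu_n,\kappa}^2)}.
\]
Here the antiresonance condition~\eqref{KSX} does all the work: because $\omega_{c,\mu_n,\kappa} = 2n\pi$, the zeros of $\sin(\xi/2)$ at $\xi = \pm 2n\pi$ cancel the poles of $(\xi^2 - \omega_{c,\mu_n,\kappa}^2)^{-1}$, so $m_n$ is analytic in a strip and $T_n$ is a genuine operator producing exponentially localized output, regardless of $\rho_1$. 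The quantitative input I would establish, by shifting the Plancherel contour into $|\Im\xi| \le \beta$ and examining $m_n$ there, is that $|m_n(\xi)|$ is of order $1/n$ near the (shifted) resonant frequencies $\xi \approx \pm 2n\pi$ but only of order $1/n^2$ on bounded frequency sets. Since $m_n$ is an odd symbol, $T_n$ maps even functions to odd ones, which forces $\rho_2$ to be odd once $\rho_1$ is even.

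For the second stage, I substitute $\rho_2 = T_n V'(\sigma + p)$ into~\eqref{i}, cancel the $\sigma$-terms via~\ref{hypo 1}, and Taylor expand $V'(\sigma + p) = V'(\sigma) + V''(\sigma)p + N(p)$ with $N(p) = O(p^2)$, arriving at a fixed-point equation of the form
\[
p = \H^{-1}\!\left[\kappa\,\delta T_n V'(\sigma) + \delta^2 N(p) + \kappa\,\delta T_n\!\left(V''(\sigma)p + N(p)\right)\right],
\]
to be solved by contraction in $E^2_\beta$ on a ball of radius $O(1/n^2)$. The decisive estimate is that the source $\H^{-1}\kappa\,\delta T_n V'(\sigma)$ is $O(1/n^2)$ in $\|\cdot\|_{2,\beta}$: since $\H^{-1}$ maps the derivative-free space $E^0_{\beta,0}$ into $E^2_\beta$, only the $\|\cdot\|_{0,\beta}$ norm of $\delta T_n V'(\sigma)$ enters, and in that $L^2$-based norm there is no amplification of the resonant frequencies, so $T_n$ contributes its bulk size $O(1/n^2)$ rather than its near-resonant size $O(1/n)$. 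The remaining terms carry an extra factor of $\|p\|_{2,\beta}$ or of the small operator $\delta T_n$, so for $n$ large the map is a self-contraction and yields a unique even $p$ with $\|\sigma - \rho_1\|_{2,\beta} = \|p\|_{2,\beta} \le C/n^2$. This gives~\ref{mass concl 1}, conclusion~\ref{mass concl 2} (the oddness of $\rho_2$ was noted above), and the first estimate in~\ref{mass concl 3}.

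The bound $\|\rho_2\|_{2,\beta} \le C/n$ and the smooth-case improvement are the crux, and both follow from reading $\rho_2 = T_n V'(\rho_1)$ in the full $H^2_\beta$ norm. The two derivatives amplify the near-resonant frequencies $\xi \approx \pm 2n\pi$ by $\omega_{c,\mu_n,\kappa}^2 \sim n^2$; since $V \in C^5$ forces only polynomial decay of $\widehat{V'(\rho_1)}$, these frequencies contribute at order $n^2 \cdot n^{-1} \cdot n^{-2} = n^{-1}$, which dominates the $O(1/n^2)$ bulk and produces exactly $C/n$. When $V$ is smooth, $\widehat{V'(\rho_1)}$ decays faster than any power, the near-resonant contribution is beaten by every power of $n$, and the $O(1/n^2)$ bulk survives in every $H^s_\beta$ norm; rerunning the contraction in $E^{s+2}_\beta$ (legitimate because $\H\colon E^{s+2}_\beta \to E^s_{\beta,0}$ is then a homeomorphism for all $s$) and bootstrapping regularity yields $\|\sigma - \rho_1\|_{s,\beta} + \|\rho_2\|_{s,\beta} \le C/n^2$ for every $s$. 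I expect the genuinely delicate part to be the sharp $n$-dependent mapping estimates for $T_n$ on the weighted spaces — carried out by shifting contours into the strip $|\Im\xi| \le \beta$ around the removable singularities at $\pm 2n\pi$ — and, in particular, cleanly separating the $O(1/n)$ resonant regime from the $O(1/n^2)$ bulk and tracking how each propagates into the norms $\|\cdot\|_{2,\beta}$ and $\|\cdot\|_{0,\beta}$.
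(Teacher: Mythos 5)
Your architecture is the paper's architecture: your $T_n$ is exactly the operator $\Sigma_{c,\mu_n,\kappa}$ of \eqref{FMO reduction}, the antiresonance condition \eqref{KSX} removes the poles just as you say, parity is handled identically, the reduced equation for $\rho_1$ is solved by contraction around $\sigma$ using the invertible linearization, and the smooth case is a bootstrap. The one structural deviation is that the paper first applies $\partial_x^{-2}$ to the reduced equation (producing \eqref{vi}) and contracts with $\L$ from \eqref{L-c defn}, whereas you keep the unregularized equation and invert $\H$ directly. That deviation is where your proposal has a genuine gap: your justification of the decisive source estimate $\|\H^{-1}\kappa\,\delta T_n V'(\sigma)\|_{2,\beta} = O(1/n^2)$ is wrong as stated.

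You argue that since only $\|\delta T_n V'(\sigma)\|_{0,\beta}$ enters, and since ``in that $L^2$-based norm there is no amplification of the resonant frequencies,'' the operator contributes its bulk size $O(1/n^2)$ rather than its near-resonant size $O(1/n)$. But the size of the symbol near resonance has nothing to do with the Sobolev index of the output norm. At real $\xi$ with $|\xi - 2n\pi| \approx 1$, the multiplier of $\delta T_n$, namely $-4\sin^2(\xi/2)\big/\bigl(c^2(\xi^2 - 4\pi^2 n^2)\bigr)$, has modulus comparable to $1/n$, and this plateau persists on the whole strip $|\Im z| \le \beta$; applied to a function whose weighted Fourier transform concentrates near $\pm 2n\pi$, the output's $H^0_\beta$ norm is of size $1/n$ times the input's, so $\|\delta T_n\|_{E^0_\beta \to E^0_\beta}$ is of order $1/n$ and no better. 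What saves the estimate is not the choice of norm but the spare regularity of the specific input: since $\sigma \in E^2_\beta$ and $V' \in C^4$ with $V'(0)=0$, Lemma~\ref{subs lemma} gives $V'(\sigma) \in E^2_\beta$, and the correct mechanism is the trade-one-derivative-for-$n^{-1}$ estimate of Lemma~\ref{S est}/Lemma~\ref{refined lemma} (the first bound in \eqref{small estimates}): $\|T_n f\|_{0,\beta} \le C n^{-2}\|f\|_{1,\beta}$, whence $\|\delta T_n V'(\sigma)\|_{0,\beta} \le Cn^{-2}\|V'(\sigma)\|_{1,\beta} \le C/n^2$ and your contraction closes. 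This is precisely the issue the paper's pre-regularization is designed to finesse: the symbol \eqref{symbol of Gamma} of $\Gamma_{c,\mu,\kappa} = \kappa\,\partial_x^{-2}\delta\Sigma_{c,\mu,\kappa}$ carries an explicit factor $\xi^{-2} \approx (2\pi n)^{-2}$ at the resonant frequencies, so $\|\Gamma_{c,\mu_n,\kappa}\|_{E^2_\beta \to E^2_\beta} \le C/n^2$ holds as an honest operator bound (Lemma~\ref{main lemma}), with no hypothesis on the input, which is what makes the abstract perturbation Lemma~\ref{routine} clean. A smaller wrinkle of the same flavor: rerunning your contraction in $E^{s+2}_\beta$ yields the smooth-case estimates only for $n \ge N_s$ with an $s$-dependent threshold; to get smoothness and $O(1/n^2)$ bounds for every $n \ge N$ with $N$ fixed, you need the regularity bootstrap you allude to, which the paper carries out via \eqref{ind 1} and \eqref{ind 2}.
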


The second result concerns the ``stiff internal spring limit.''

\begin{theorem}\label{stiff thm}  
Fix $\mu>0$. 
Assume that $V(r)$ is $C^5$, $V'(0) = 0$ and $c \ne 0$ is a $(1+\mu)^{-1}V$-admissible wave speed. 
Suppose that 
\be\label{kappa-n defn}
\kappa 
= \kappa_n
:= {4\pi^2 c^2 n^2 \mu \over 1+\mu} , \quad n \in \N
\ee
so that~\eqref{KSX} is satisfied.
Then there exist $C>0$ and $N \in \N$ such that $n \ge N$ implies the existence of unique functions $\rho_1$, $\rho_2 \in H_{\beta}^2$ with the following properties:

\begin{enumerate}[label={(\roman*)}, ref={(\roman*)}]

\item\label{stiff concl 1}  
$\rho_1$ and $\rho_2$ solve~\eqref{traveling}.

\item\label{stiff concl 2}
$\rho_1$ is even and $\rho_2$ is odd.

\item\label{stiff concl 3}
$\| \sigma - \rho_1\|_{2,\beta} + \|\rho_2\|_{2,\beta} \le C/n.$
\end{enumerate}
Moreover, if $V$ is smooth then so are $\rho_1$ and $\rho_2$, and the estimate in~\ref{stiff concl 3} improves to $\| \sigma - \rho_1\|_{s,\beta}+ \|\rho_2\|_{s,\beta}\le C/n^2$ for any $s \ge 0$.
\end{theorem}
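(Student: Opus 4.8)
The plan is to eliminate $\rho_2$, reduce \eqref{traveling} to a scalar equation for $\rho_1$ that is a regular perturbation of the monatomic FPUT profile equation, and then solve it near the admissible profile $\sigma$ by a quantitative implicit function argument. Write $\omega := \omega_{c,\mu,\kappa}$, which equals $2\pi n$ under \eqref{kappa-n defn}. Equation \eqref{ii} is a driven oscillator, $\rho_2'' + \omega^2\rho_2 = -c^{-2}\delta V'(\rho_1)$, and since $\delta$ has symbol $2i\sin(\xi/2)$ the forcing has a Fourier zero at $\xi = \pm\omega = \pm 2\pi n$ that exactly cancels the zero of $\omega^2 - \xi^2$. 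Hence the unique localized solution is $\rho_2 = \mathcal S_n[\rho_1] := -c^{-2} m_n(D) V'(\rho_1)$, where $m_n$ is the Fourier multiplier with symbol $m_n(\xi) = 2i\sin(\xi/2)/(\omega^2-\xi^2)$; the cancellation makes $m_n$ entire (the singularities at $\pm\omega$ are removable) and bounded on $\R$. This is exactly where the antiresonance condition \eqref{KSX} is used. Substituting into \eqref{i} gives the scalar equation $\mathcal N_n(\rho_1) := c^2\rho_1'' - \delta^2 V'(\rho_1) - \kappa\,\delta\,\mathcal S_n[\rho_1] = 0$, and a short computation shows that as $n\to\infty$ its symbol converges to that of $\mathcal N_\infty(\rho_1) := c^2\rho_1'' - (1+\mu)^{-1}\delta^2 V'(\rho_1)$, whose even localized solution is the $(1+\mu)^{-1}V$-admissible profile $\sigma$.

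I would solve $\mathcal N_n(\rho_1) = 0$ for $\rho_1 = \sigma + \psi$ with $\psi$ small in $E^2_\beta$. The linearization at $\sigma$ is $D\mathcal N_n(\sigma) = \H + R_n(D)[V''(\sigma)\,\cdot\,]$, where $\H$ is the operator \eqref{H defn} built from $(1+\mu)^{-1}V$ and $R_n$ is the multiplier with symbol $R_n(\xi) = \frac{-4\mu\,\xi^2\sin^2(\xi/2)}{(1+\mu)(\omega^2-\xi^2)}$; the same multiplier produces the residual, since $\mathcal N_\infty(\sigma)=0$ forces $\mathcal N_n(\sigma) = R_n(D)V'(\sigma)$. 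By $(1+\mu)^{-1}V$-admissibility, $\H \colon E^2_\beta \to E^0_{\beta,0}$ is a homeomorphism. It therefore suffices to show (a) that $R_n(D)[V''(\sigma)\,\cdot\,] \to 0$ in the operator norm $E^2_\beta \to E^0_{\beta,0}$, so that a Neumann series inverts $D\mathcal N_n(\sigma)$ with a bound uniform in $n$, and (b) that $\|R_n(D)V'(\sigma)\|_{0,\beta}$ is small. Granting these, the contraction mapping theorem yields a unique $\psi$ with $\|\psi\|_{2,\beta} \lesssim \|R_n(D)V'(\sigma)\|_{0,\beta}$, whence $\rho_1 = \sigma+\psi$ and $\rho_2 = \mathcal S_n[\rho_1]$; the hypothesis $V \in C^5$ is what lets one control the quadratic remainder $V'(\sigma+\psi) - V'(\sigma) - V''(\sigma)\psi$ in $E^0_{\beta,0}$. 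Since $\sigma$ is even and $m_n$ is odd and purely imaginary while $R_n$ is even and real, the scheme preserves parity: $\rho_1$ is even and $\rho_2$ is odd, giving \ref{stiff concl 2}.

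The crux, and the step I expect to be genuinely delicate, is estimating $m_n$ and $R_n$ on the weighted spaces $H^s_\beta$, because both symbols are as large as $O(n)$ in an $O(1)$-wide window around the resonances $\xi = \pm\omega$ rather than the $O(n^{-2})$ one sees for bounded $\xi$. The right tool is that a multiplier analytic and bounded on the strip $\{|\Im\xi| \le \beta\}$ acts boundedly on $H^s_\beta$ with norm controlled by its supremum over that strip (this is where decay like $e^{-\beta|x|}$ is exploited). One finds $\sup_{|\Im\xi|\le\beta}|m_n(\xi)| = O(1/n)$, which already gives $\|\rho_2\|_{2,\beta} = O(1/n)$. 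For $R_n$ the strip supremum is $O(n)$ and the crude bound is useless, so instead one divides the symbol by $(1+\xi^2)$ and pays two derivatives of the data: $\sup_{|\Im\xi|\le\beta}|R_n(\xi)/(1+\xi^2)| = O(1/n)$, so that $\|R_n(D)V'(\sigma)\|_{0,\beta} \lesssim n^{-1}\|V'(\sigma)\|_{2,\beta}$ and likewise $\|R_n(D)[V''(\sigma)f]\|_{0,\beta} \lesssim n^{-1}\|f\|_{2,\beta}$. These bounds give (a) and (b) with rate $1/n$, and combining with $\|\rho_2\|_{2,\beta} = O(1/n)$ yields \ref{stiff concl 3}.

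For smooth $V$ one upgrades the entire argument to the scale $E^{s+2}_\beta \to E^s_{\beta,0}$ for every $s \ge 0$, using the smoothness clause of Definition \ref{main assumption} to bootstrap the regularity of $\sigma$ and to run the fixed point in arbitrarily high $H^s_\beta$. Having many derivatives to spend, one divides $m_n$ and $R_n$ by higher powers of $(1+\xi^2)$; each extra factor suppresses the resonant window by $n^{-2}$ while leaving the $O(n^{-2})$ bulk untouched, so the resonant contribution drops below the bulk and both $\|\sigma-\rho_1\|_{s,\beta}$ and $\|\rho_2\|_{s,\beta}$ improve to $O(1/n^2)$, which is the claimed sharpening. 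Smoothness of $\rho_1$ and $\rho_2$ then follows from their membership in every $H^s_\beta$.
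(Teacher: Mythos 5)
Your proposal is correct and is essentially the paper's own argument: eliminate $\rho_2$ through the antiresonant (now entire) multiplier, recognize the resulting scalar equation as a perturbation of the $(1+\mu)^{-1}V$ monatomic traveling-wave problem whose residual is $O(1/n)$ in weighted norms (small only after trading derivatives, since the symbol is $O(n)$ on the strip $|\Im\xi|\le\beta$ near $\xi=\pm 2\pi n$), solve by contraction about $\sigma$ using admissibility, and upgrade to $O(1/n^2)$ in every $H^s_\beta$ when $V$ is smooth. The only deviations are cosmetic: you keep the second-order equation and invert $\H$ onto $E^0_{\beta,0}$ where the paper first applies $\partial_x^{-2}$ and inverts the zeroth-order operator $\L$ on $E^2_\beta$; you compute the residual symbol $R_n$ directly where the paper isolates the same object via a partial-fractions/sinc decomposition of $\Gamma_{c,\mu,\kappa}$; and you re-run the contraction at higher regularity where the paper bootstraps the fixed $H^2_\beta$ solution.
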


Before moving on to the proofs, we note that there are a number of choices for $c$ and $V$ where it known that $c$ is a $V$-admissible wave speed. 

\begin{enumerate}[label={(\roman*)}]

\item
Friesecke and Wattis~\cite{friesecke-wattis} prove the existence of solitary waves for potentials $V \in C^2$ that are ``superquadratic'' in the sense that $V(r)/r^2$ is strictly increasing on either $(0,\infty)$ or $(-\infty,0)$.
The wave speeds $c$ are ``supersonic'' in the sense that $c^2 > V''(0)$.
Examples of such superquadratic potentials for which their method applies include the following.

\begin{enumerate}[label=$\bullet$]

\item
The ``cubic'' FPUT potential 
\[
V(r) 
= \frac{1}{2}ar^2 + \frac{1}{6}br^3, \ a > 0, \ b \ne 0,
\]
which, along with the ``quartic'' potential below, was part of the original FPUT study~\cite{fput-original}.

\item
The ``quartic'' FPUT potential 
\[
V(r) 
= \frac{1}{2}r^2 + \frac{1}{4}br^4, \ b > 0.
\]

\item
The Toda potential $V(r) = ab^{-1}(e^{-br}-br-1)$, $ab > 0$ (see also~\cite{toda, teschl}).

\item 
The Lennard-Jones potential $V(r) = a((r+d)^{-6} + r^{-6})^2$, $a$, $d > 0$.
\end{enumerate}

\item
Friesecke and Pego~\cite{friesecke-pego1} find solitary waves in the ``near-sonic/KdV limit'' for potentials $V \in C^4$ satisfying $V(0) = V'(0) = 0$, $V''(0) > 0$ and $V'''(0) \ne 0$.
Their solitary waves have speeds $c$ satisfying $c^2$ larger than, but close to, $V''(0)$.
All of the potentials mentioned above from the Friesecke-Wattis results, except the quartic potential, fall into this category.
Hoffman and Wright~\cite{hoffman-wright} and Faver and Hupkes~\cite{faver-hupkes} prove the invertibility of $\H$ from~\eqref{H defn} under slightly different sets of assumptions for the cubic FPUT potential.
We remark that Iooss~\cite{iooss} adapts the spatial dynamics and center manifold theory of~\cite{ik} to determine {\it{all}} small bounded traveling wave solutions to the monatomic FPUT lattice for potentials $V \in C^4$, including but not limited to solitary waves.

\item
Herrmann and Matthies~\cite{herrmann-matthies-asymptotic, herrmann-matthies-uniqueness, herrmann-matthies-stability} study the model potential
\[
V(r)
= \frac{1}{m(m+1)}\left(\frac{1}{(1-r)^m} - mr-1\right), \ m > 1,
\]
in the ``high-energy limit,'' for arbitrarily large wave speeds.
In particular, the invertibility of $\H$ for this potential is discussed in~\cite{herrmann-matthies-uniqueness}.
See also the additional convex potentials described in Section 4.4 of~\cite{herrmann10}.
\end{enumerate}

\section{The Proofs of Theorems~\ref{mass thm} and~\ref{stiff thm}}

\subsection{Reduction to a single equation}
The first step in the proofs  is to use the antiresonance condition~\eqref{KSX} to develop a useful formula for $\rho_2$ in terms of $\rho_1$, thereby reducing the problem 
to a single equation for $\rho_1$.
We saw in~\eqref{rho2 via VoP} one such formula, but it will be more convenient to work on the Fourier side and express the formula in terms of Fourier multipliers.

We assume $c$, $\mu$ and $\kappa$ satisfy~\eqref{KSX} 
and apply the Fourier transform to the second traveling wave equation~\eqref{ii} to get 
\be\label{iihat}
\left(-c^2 \mu \xi^2 + \kappa(1+\mu)\right)\hat{\rho}_2(\xi) + 2 i \mu \sin(\xi/2)\hat{V'(\rho_1)}(\xi) = 0.
\ee
Isolating ${\rho}_2$ gives
\be\label{FMO reduction}
{\rho}_2 
={\Sigma}_{c,\mu,\kappa} V'(\rho_1)
\ee
where the operator $\Sigma_{c,\mu,\kappa}$ is the Fourier multiplier operator with symbol
\be\label{symbol of Sigma}
\tilde{\Sigma}_{c,\mu,\kappa}(\xi)
:={2i \mu \sin(\xi/2) \over c^2 \mu \xi^2 - \kappa (1+\mu)}.
\ee
Condition~\eqref{KSX} implies that the singularities in the multiplier due to the zeroes of the denominator  are removable; in fact, demanding that these
singularities are removable is another path to determining~\eqref{KSX}.

\begin{remark} 
Standard results about the Fourier transforms of $\sinc$ and convolutions can be used to show that the formula~\eqref{FMO reduction} 
coincides
with~\eqref{rho2 via VoP}.
\end{remark}

We put the expression for $\rho_2$ from~\eqref{FMO reduction} into the first traveling wave equation~\eqref{i}, to get
\be\label{v}
c^2 \rho_1'' - \left(\delta^2   + \kappa \delta \Sigma_{c,\mu,\kappa}\right) V'(\rho_1) 
= 0.
\ee
Kevrekidis, Stefanov and Xu also reduce their solitary wave problem in~\cite{ksx} to a single equation for $\rho_1$ by using Fourier methods.
But afterward their methods and ours diverge: they use a variational approach to solve for $\rho_1$, whereas we will construct it with a perturbation argument.
(We note that they set up a similar Fourier argument in~\cite{xks} for an MiM lattice whose bead interaction is governed by a Hertzian potential {\it{with}} precompression, unlike that in~\cite{ksx}.)

Note that $\sin(\xi/2) \tilde{\Sigma}_{c,\mu,\kappa}$ is quadratic in $\xi$ for $\xi \sim 0$, as is the multiplier for $\delta^2$. This motivates  ``regularizing''~\eqref{v} by applying
$\partial_x^{-2}$ (intepreted as a Fourier multiplier with symbol $-1/\xi^2$) to it to get:
\be\label{vi}
c^2 \rho_1- \left( \partial_x^{-2} \delta^2   + \Gamma_{c,\mu,\kappa}\right) V'(\rho_1)
= 0.
\ee
Here $\Gamma_{c,\mu,\kappa} := \kappa \partial_x^{-2} \delta \Sigma_{c,\mu,\kappa}$ is a Fourier multiplier with symbol
\be\label{symbol of Gamma}
\tilde{\Gamma}_{c,\mu,\kappa}(\xi)={4 \kappa \mu \sin^2(\xi/2) \over  \xi^2 (c^2 \mu\xi^2 - \kappa (1+\mu))}= {4 \kappa \over c^2} {\sin^2(\xi/2) \over \xi^2( \xi^2 - \omega^2_{c,\mu,\kappa})}.
\ee
Likewise the multiplier for $\partial_x^{-2} \delta^2$ is $\sinc^2(\xi/2)$.
Equation~\eqref{vi} is the main version of the system we will be working to solve.

\subsection{Estimates on $\Sigma_{c,\mu,\kappa}$ and $\Gamma_{c,\mu,\kappa}$.}
A routine partial fractions expansion shows that
$$
{ 1 \over \xi^2 (\xi^2 - \omega^2)}= -{1 \over \omega^2 \xi^2} + {1 \over 2 \omega^3(\xi-\omega)}-{1 \over 2 \omega^3(\xi+\omega)}.
$$
Thus we have
$$
{\tilde{\Gamma}_{c,\mu,\kappa}(\xi)} 
=  {4 \kappa \over c^2}
\left(
-{ \sin^2\left({\xi /2}\right) \over \omega_{c,\mu,\kappa}^2 \xi^2}
+{ \sin^2\left({\xi /2}\right) \over 2 \omega^3_{c,\mu,\kappa}(\xi- \omega_{c,\mu,\kappa})} 
-{ \sin^2\left({\xi /2}\right) \over 2 \omega^3_{c,\mu,\kappa}(\xi+ \omega_{c,\mu,\kappa})} 
\right) .
$$

If we assume the antiresonance condition~\eqref{KSX}, then the addition of angles formula tells us that 
$
\sin\left(\xi/2\right) 
= \sin\left( (\xi-\omega_{c,\mu,\kappa})/2\right) \cos\left( \omega_{c,\mu,\kappa}/2\right).
$
Using this in the last expression gives, after some algebra:
$$
{\tilde{\Gamma}_{c,\mu,\kappa}(\xi)} 
=  -{ \kappa \over  c^2 \omega^2_{c,\mu,\kappa}} { \sinc^2\left({\xi / 2}\right)}+ \tilde{\Psi}_{c,\mu,\kappa}(\xi)
$$
where
$$
\tilde{\Psi}_{c,\mu,\kappa}(\xi):= { \kappa \over c^2 \omega^3_{c,\mu,\kappa}}
{\sin\left({\xi / 2}\right) \cos\left({\omega_{c,\mu,\kappa} / 2}\right)} \left[  \sinc\left({(\xi- \omega_{c,\mu,\kappa}) / 2}\right) 
-\sinc\left({(\xi+ \omega_{c,\mu,\kappa}) / 2}\right) \right].
$$
Therefore~\eqref{KSX} implies
\be\label{KSX implies}
\Gamma_{c,\mu,\kappa} = -{\kappa \over c^2 \omega^2_{c,\mu,\kappa}} \partial_x^{-2} \delta^2 + \Psi_{c,\mu,\kappa}
\ee
where $\Psi_{c,\mu,\kappa}$ is a Fourier multiplier with symbol $\tilde{\Psi}_{c,\mu,\kappa}(\xi)$.

A very similar line of reasoning shows that $\eqref{KSX}$ implies
\be\label{KSX implies 2}
\tilde{\Sigma}_{c,\mu,\kappa}(\xi) 
= {i  \over 2 c^2 \omega_{c,\mu,\kappa}}\cos(\omega_{c,\mu,\kappa}/2) \left[  \sinc\left({(\xi- \omega_{c,\mu,\kappa}) / 2}\right) 
-\sinc\left({(\xi+ \omega_{c,\mu,\kappa}) / 2}\right) \right].
\ee

With~\eqref{KSX implies} and~\eqref{KSX implies 2} in hand, we can call on the following result (see~\cite{beale80}, for instance) for quantitative estimates:

\begin{lemma}\label{FMO estimate} 
Let $M$ be a Fourier multiplier operator and assume that its symbol $\tilde{M}(z)$ is analytic in the set $\left\{ |\Im(z)| \le b \right\}$. 
Then
$$
\| M \|_{H^s_b \to H^{s-r}_b} \le C \sup_{\xi \in \R} \left\vert (1+|\xi|)^{-r} \tilde{M}(\xi \pm  i b)\right \vert.
$$
The constant $C >0$ depends only on $r$, not on $M$ or $b$ or $s$.
\end{lemma}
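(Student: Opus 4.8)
The mechanism at the heart of this estimate is that conjugating a Fourier multiplier by a \emph{pure} exponential produces another Fourier multiplier whose symbol is shifted into the complex plane, and the hypothesis that $\tilde{M}$ is analytic on $\{|\Im z|\le b\}$ is precisely what licenses that shift. I would therefore reduce the weighted bound to an unweighted one. The unweighted engine is immediate from Plancherel: if $N$ is any Fourier multiplier with symbol $\tilde{N}$, then for every Sobolev exponent $t$,
\[
\|Nf\|_{H^{t-r}}^2 = \int_\R (1+\xi^2)^{t}|\hat{f}(\xi)|^2\,(1+\xi^2)^{-r}|\tilde{N}(\xi)|^2\,d\xi \le \Big(\sup_{\xi\in\R}(1+\xi^2)^{-r}|\tilde{N}(\xi)|^2\Big)\|f\|_{H^t}^2 ,
\]
and since $(1+\xi^2)^{-r}$ and $(1+|\xi|)^{-2r}$ are comparable with a constant depending only on $r$, this gives $\|N\|_{H^t\to H^{t-r}}\le C(r)\sup_\xi(1+|\xi|)^{-r}|\tilde{N}(\xi)|$, with $C(r)$ manifestly independent of $N$, of $t$, and of $b$.

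The key step is the conjugation identity. Writing $h=e^{b\,\cdot}f$, I claim the operator $h\mapsto e^{b\,\cdot}M\big(e^{-b\,\cdot}h\big)$ is the Fourier multiplier with symbol $\tilde{M}(\xi+ib)$, and symmetrically $h\mapsto e^{-b\,\cdot}M\big(e^{b\,\cdot}h\big)$ has symbol $\tilde{M}(\xi-ib)$. Formally this is because $\widehat{e^{-b\,\cdot}h}(\xi)=\hat{h}(\xi-ib)$ is the analytic continuation of $\hat{h}$, so that after applying $M$ and multiplying back by $e^{b\,\cdot}$ one recovers, after a shift of the inversion contour by $ib$, multiplication of $\hat{h}(\xi)$ by $\tilde{M}(\xi+ib)$. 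Making this rigorous is the substance of the proof: I would work first on the dense class of $h$ with $\hat{h}$ compactly supported (or Schwartz), where the analytic continuation is elementary, and shift the rectangular contour using Cauchy's theorem, invoking the analyticity of $\tilde{M}$ across the strip together with the decay of $\hat{h}$ to discard the two vertical sides; a density argument then extends the identity. Feeding these two multipliers into the engine yields
\[
\big\|e^{\pm b\,\cdot}Mf\big\|_{H^{s-r}} \le C(r)\,\sup_{\xi\in\R}(1+|\xi|)^{-r}\big|\tilde{M}(\xi\pm ib)\big|\;\big\|e^{\pm b\,\cdot}f\big\|_{H^s}.
\]

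It remains to pass from the one-sided exponential weights to the two-sided weight $\cosh(\cdot)^b$, which behaves like $e^{b x}$ as $x\to+\infty$ and like $e^{-b x}$ as $x\to-\infty$. Using the equivalence of $\|\cdot\|_{s,b}$ with $\big\|e^{b\,\cdot}\,\cdot\big\|_{H^s}+\big\|e^{-b\,\cdot}\,\cdot\big\|_{H^s}$ on both the domain and the range, the two displayed one-sided estimates combine to
\[
\|Mf\|_{s-r,b} \le C(r)\Big(\sup_{\xi}(1+|\xi|)^{-r}|\tilde{M}(\xi+ib)| + \sup_{\xi}(1+|\xi|)^{-r}|\tilde{M}(\xi-ib)|\Big)\|f\|_{s,b},
\]
and bounding each term by $\sup_\xi|(1+|\xi|)^{-r}\tilde{M}(\xi\pm ib)|$ produces the asserted inequality; the appearance of \emph{both} shifts $\xi\pm ib$ is exactly the footprint of the two-sided weight.

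I expect the rigorous contour shift of the second paragraph --- equivalently, the Paley--Wiener characterization of $H^s_b$ in terms of the analytic continuation of $\hat{f}$ and its traces on the lines $\Im\xi=\pm b$ --- to be the main obstacle, since it is the only place genuine complex analysis enters and the only place where the strip hypothesis is used. The remaining work is bookkeeping: one must check that the elementary weight comparisons and the Plancherel engine contribute constants depending only on $r$, so that the final $C$ inherits independence from $M$, $b$, and $s$.
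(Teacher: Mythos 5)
Your skeleton --- Plancherel as the unweighted engine, conjugation by $e^{\pm b\,\cdot}$ to shift the symbol onto the lines $\Im z = \pm b$, and recombination of the two one-sided weights --- is exactly the mechanism behind the result the paper invokes (the paper gives no proof of its own; it defers to \cite{beale80}), and, carried out, it does yield a bound of the form $\| M \|_{H^s_b \to H^{s-r}_b} \le C(r,s,b)\, \sup_{\xi \in \R} \vert (1+|\xi|)^{-r}\tilde{M}(\xi \pm ib)\vert$ whose key feature --- independence of the constant from the symbol $M$ --- is what Lemmas~\ref{main lemma}, \ref{S est}, and~\ref{refined lemma} actually use. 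One repair is needed even for that weaker conclusion: your dense class is wrong. If $\hat{h}$ has compact support, or if $h$ is merely Schwartz, then $e^{-b\,\cdot}h$ grows exponentially as $x \to -\infty$ (polynomial or super-polynomial decay of $h$ cannot beat $e^{b|x|}$), so $M(e^{-b\,\cdot}h)$ is not even defined and the conjugation identity cannot be stated on that class. The natural class is $h \in C^\infty_c(\R)$ --- equivalently $f = e^{-b\,\cdot}h$ compactly supported, which is dense in $H^s_b$ because $f \mapsto \cosh(\cdot)^b f$ is an isometry of $H^s_b$ onto $H^s$ --- and for such $h$ the contour shift is elementary.

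The genuine gap is your closing claim that the ``elementary weight comparisons'' contribute constants depending only on $r$: they do not, and the lemma's final sentence --- $C$ depends only on $r$, not on $M$ \emph{or $b$ or $s$} --- is precisely what your route cannot deliver. Already at $s=0$, the domain-side comparison $\|e^{b\,\cdot}f\|_{L^2} \le K \|\cosh(\cdot)^b f\|_{L^2}$ forces $K \ge \sup_{x} e^{bx}\cosh(x)^{-b} = 2^b$, and this exponential loss is intrinsic rather than an artifact of normalization: any argument that factors through the equivalence of $\|\cdot\|_{0,b}$ with $\|e^{b\,\cdot}\,\cdot\|_{L^2} + \|e^{-b\,\cdot}\,\cdot\|_{L^2}$ pays, in the product of the two comparison constants, at least the distortion between these norms, which is of size $2^{b-1}$ (test one comparison on functions concentrated near $+\infty$, the other on functions concentrated near $0$); rescaling the exponential weights merely shifts the loss from one factor to the other. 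For $s>0$ there is further $b$- and $s$-dependence, since pushing $e^{\pm b\,\cdot}$ through an $H^s$ norm is a multiplication-operator estimate for $e^{\pm bx}\cosh(x)^{-b}$, each derivative of which costs a factor of $b$. So your argument, once the details are filled in, proves the estimate with $C = C(r,s,b)$ --- enough for every application in this paper, where the constants $C_b$ are explicitly allowed to depend on $b$ --- but not the statement as written. To obtain the stated uniformity one must work with the weight $\cosh(x)^b$ directly, for instance by exploiting its $b$-uniform submultiplicativity $\cosh(x)^b \le e^{b|x-y|}\cosh(y)^b$ together with a contour-shifted kernel representation of $M$, rather than passing through the global equivalence with one-sided exponential weights.
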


The $\sinc$ and $\sin$ functions are entire and are also bounded in horizontal strips of the complex plane, {\it i.e.} $\sup_{|\Im(z)| \le b} \left(  |\sin(z)| + |\sinc(z)|\right) \le C_b$ for some  $C_b>0$.  
Moreover, the mulitpliers for $\Gamma_{c,\mu,\kappa}$ and $\Psi_{c,\mu,\kappa}$ are even in $\xi$ and as such map even functions to even functions. 
The multiplier for  $\Sigma_{c,\mu,\kappa}$ is odd and thus flips parity. 
Therefore we can conclude from~\eqref{KSX implies} and~\eqref{KSX implies 2}:

\begin{lemma}\label{main lemma}
For all $b > 0$  there exists a constant $C_b > 0$ such that if  $c$, $\mu$ and $\kappa$ are positive and meet~\eqref{KSX}, then 
$$
\| \Sigma_{c,\mu,\kappa} \|_{E^s_b \to O^s_b}  
\le {C_b \over  c^{2}  \omega_{c,\mu,\kappa}},
\quad 
\| \Gamma_{c,\mu,\kappa} \|_{E^s_b \to E^s_b}  
\le {C_b \kappa \over  c^{2}  \omega^2_{c,\mu,\kappa}} 
\mand 
\| \Psi_{c,\mu,\kappa} \|_{E^s_b \to E^s_b}  
\le {C_b \kappa \over  c^{2}  \omega^3_{c,\mu,\kappa}} .
$$
\end{lemma}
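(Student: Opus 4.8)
The plan is to feed the explicit symbol formulas \eqref{KSX implies} and \eqref{KSX implies 2} into the general multiplier bound of Lemma~\ref{FMO estimate}, taking $r = 0$ throughout since none of $\Sigma_{c,\mu,\kappa}$, $\Gamma_{c,\mu,\kappa}$, $\Psi_{c,\mu,\kappa}$ loses derivatives. Writing $\omega := \omega_{c,\mu,\kappa}$ for brevity, the point is that \eqref{KSX} has already turned each symbol into an explicit prefactor times sums and products of $\sin(\xi/2)$, $\sinc(\xi/2)$, and the shifted functions $\sinc((\xi\pm\omega)/2)$. Each of these building blocks is entire, so every symbol is analytic on the strip $\{|\Im(z)| \le b\}$ and the hypothesis of Lemma~\ref{FMO estimate} is met; the would-be poles at $\xi = \pm\omega$ in the original expressions \eqref{symbol of Gamma} and \eqref{symbol of Sigma} are exactly what \eqref{KSX} removes.

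Next I would estimate $\sup_{\xi}|\tilde{M}(\xi\pm ib)|$ for each of the three symbols. The essential input is the quoted uniform strip bound $\sup_{|\Im(z)| \le b}\bigl(|\sin z| + |\sinc z|\bigr) \le C_b$, whose constant does \emph{not} depend on the horizontal shift; consequently each bracketed difference $\sinc((\xi-\omega)/2) - \sinc((\xi+\omega)/2)$ is bounded by $2C_b$ on the line $\xi \pm ib$, uniformly in $c$, $\mu$, $\kappa$. Since $\omega$ is real we also have $|\cos(\omega/2)| \le 1$. Pulling out the prefactors exhibited in \eqref{KSX implies} and \eqref{KSX implies 2} then yields $\sup_\xi|\tilde{\Sigma}_{c,\mu,\kappa}(\xi\pm ib)| \le C_b/(c^2\omega)$ and $\sup_\xi|\tilde{\Psi}_{c,\mu,\kappa}(\xi\pm ib)| \le C_b\kappa/(c^2\omega^3)$, which by Lemma~\ref{FMO estimate} give the stated bounds for $\Sigma_{c,\mu,\kappa}$ and $\Psi_{c,\mu,\kappa}$ directly.

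For $\Gamma_{c,\mu,\kappa}$ I would apply the triangle inequality to the decomposition \eqref{KSX implies}. The multiplier $-\tfrac{\kappa}{c^2\omega^2}\sinc^2(\xi/2)$ contributes a term of size $C_b\kappa/(c^2\omega^2)$, while the $\Psi_{c,\mu,\kappa}$ piece contributes $C_b\kappa/(c^2\omega^3)$. Because \eqref{KSX} forces $\omega = 2n\pi \ge 2\pi$, the factor $\omega^{-3}$ is dominated by $\omega^{-2}$, so the two contributions collapse into the single bound $C_b\kappa/(c^2\omega^2)$ after enlarging the constant. The restriction to the even and odd subspaces is then a parity check: $\tilde{\Gamma}_{c,\mu,\kappa}$ and $\tilde{\Psi}_{c,\mu,\kappa}$ are even in $\xi$ (in $\tilde{\Psi}_{c,\mu,\kappa}$ the odd factor $\sin(\xi/2)$ multiplies an odd bracket), so these multipliers send even functions to even functions, whereas $\tilde{\Sigma}_{c,\mu,\kappa}$ is odd and therefore maps $E^s_b$ into $O^s_b$. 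Since $E^s_b$ and $O^s_b$ are closed subspaces of $H^s_b$, the operator norms over them are no larger than the full-space norms already controlled.

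I do not expect a genuine obstacle; the single point demanding care is confirming that $C_b$ in the strip estimate is truly independent of the shift $\omega$ — equivalently, that translating a $\sinc$ horizontally does not degrade its bound on $\{|\Im(z)|\le b\}$ — since this is precisely what makes all of the $\omega$-, $\kappa$-, and $c$-dependence emerge in the explicit prefactors rather than hide inside the constant. Everything else is routine bookkeeping once \eqref{KSX implies} and \eqref{KSX implies 2} are in hand.
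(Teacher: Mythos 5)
Your proposal is correct and follows essentially the same route as the paper: the paper likewise obtains the lemma by feeding the decompositions~\eqref{KSX implies} and~\eqref{KSX implies 2} into the multiplier bound of Lemma~\ref{FMO estimate}, using that $\sin$ and $\sinc$ are entire and bounded on horizontal strips (a bound which, as you note, is unaffected by the real shifts $\pm\omega_{c,\mu,\kappa}$), and then invoking the parity of the symbols to restrict to $E^s_b$ and $O^s_b$. Your extra bookkeeping --- taking $r=0$, absorbing the $\omega^{-3}$ term from $\Psi_{c,\mu,\kappa}$ into the $\omega^{-2}$ bound for $\Gamma_{c,\mu,\kappa}$ via $\omega_{c,\mu,\kappa}=2n\pi\ge 2\pi$ --- is exactly what the paper leaves implicit.
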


We need a few more technical estimates. 
For $\omega  \in \R$ let $S_\omega$ be the Fourier multiplier with symbol $\tilde{S}_\omega(\xi):=\sinc((\xi-\omega)/2)$.
Note that these are constituent elements of $\partial_x^{-2} \delta^2$, $\Sigma_{c,\mu,\kappa}$ and $\Psi_{c,\mu,\kappa}$. 
We have:

\begin{lemma}\label{S est} For all $b \ge 0$ there exists $C_b>0$  such that, for all $s \ge 0$,
$$
\| S_\omega\|_{H^{s}_b \to H^{s-1}_b} \le C_b/(1+|\omega|) \mand \| S_\omega\|_{H^{s}_b \to H^{s+1}_b} \le C_b(1+|\omega|).
$$
\end{lemma}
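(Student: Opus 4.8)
The plan is to read both inequalities off of Lemma~\ref{FMO estimate}, applied with $r=1$ for the bound into $H^{s-1}_b$ and with $r=-1$ for the bound into $H^{s+1}_b$. Since $\sinc$ is entire, the symbol $\tilde{S}_\omega(\xi)=\sinc((\xi-\omega)/2)$ is analytic on every strip $\{|\Im(z)|\le b\}$, so the hypothesis of Lemma~\ref{FMO estimate} holds and, crucially, the constant it supplies depends only on $r$; thus the asserted uniformity in $s$ comes for free, and the entire problem reduces to bounding the two quantities
\[
\sup_{\xi\in\R}(1+|\xi|)^{-1}\bigl|\sinc\bigl((\xi\pm ib-\omega)/2\bigr)\bigr|
\mand
\sup_{\xi\in\R}(1+|\xi|)\bigl|\sinc\bigl((\xi\pm ib-\omega)/2\bigr)\bigr|
\]
by $C_b/(1+|\omega|)$ and $C_b(1+|\omega|)$, respectively.

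The tool for these suprema is a pair of complementary pointwise bounds on $\sinc$ along the shifted line. Writing $w=(\xi\pm ib-\omega)/2$, so that $|w|=\tfrac12\sqrt{(\xi-\omega)^2+b^2}$, the stated boundedness of $\sinc$ in horizontal strips gives the uniform estimate $|\sinc(w)|\le C_b$, while $\sinc(w)=\sin(w)/w$ together with the boundedness of $\sin$ in strips gives the decaying estimate $|\sinc(w)|\le C_b/|w|\le 2C_b/|\xi-\omega|$. I would then split each supremum according to whether $|\xi-\omega|\le 1$ or $|\xi-\omega|>1$. For the first supremum, the near regime uses $1+|\xi|\gtrsim 1+|\omega|$ (a consequence of $|\xi|\ge|\omega|-1$) together with $|\sinc(w)|\le C_b$, and the far regime uses the decay $1/|\xi-\omega|$ against the triangle inequality $1+|\omega|\le 1+|\xi|+|\xi-\omega|$. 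The second supremum is handled by the same dichotomy, now invoking $1+|\xi|\le 2(1+|\omega|)$ when $|\xi-\omega|\le 1$ and $(1+|\xi|)/|\xi-\omega|\le 2(1+|\omega|)$ when $|\xi-\omega|>1$.

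This is essentially bookkeeping, and I expect no serious obstacle; the one point that must be handled with care is the $\omega$-dependence. The mechanism is that the $\sinc$ factor peaks at $\xi=\omega$ with height $1$, and there the polynomial weight $(1+|\xi|)$ equals $(1+|\omega|)$, which produces the factor $(1+|\omega|)^{\mp1}$; away from the peak, the $1/|\xi-\omega|$ decay of $\sinc$ dominates the weight, so no larger contribution arises. Verifying that the constants emerging from the two case splits amalgamate into a single $C_b$, independent of both $\omega$ and $s$, then completes the argument.
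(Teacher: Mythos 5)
Your proposal is correct and follows essentially the same route as the paper: both reduce the lemma to Lemma~\ref{FMO estimate} and then control $\sup_{\xi\in\R}(1+|\xi|)^{\mp1}|\sinc((\xi\pm ib-\omega)/2)|$ using the fact that $\sinc$ is entire, bounded in horizontal strips, and decays like $1/|\xi-\omega|$ along the shifted line. The only difference is cosmetic: the paper packages your two pointwise bounds into the single estimate $|\sinc((\xi+ib-\omega)/2)|\le C_b(1+(\xi-\omega)^2)^{-1/2}$ and maximizes the resulting product by calculus, whereas you split into the regimes $|\xi-\omega|\le 1$ and $|\xi-\omega|>1$ and use triangle inequalities, which yields the same constants up to bookkeeping.
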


\begin{proof}
The multiplier for $S_\omega$ is entire and from Lemma~\ref{FMO estimate} we know that
$$
\| S_\omega\|_{H^{s}_b \to H^{s\pm 1}_b} \le C \sup_{\xi \in \R} \left\vert (1+|\xi|)^{\pm 1} \sinc((\xi+ib-\omega)/2) \right \vert.
$$ 
Elementary considerations show that there is a constant $C_b >0$ for which 
\[
|\sinc((\xi+ib-\omega)/2)|
\le C_b(1+(\xi -\omega)^2)^{-1/2}
\]
holds for all $\xi \in \R$. 
Similarly $(1+|\xi|)^{-1} \le C(1+\xi^2)^{-1/2}$ for all $\xi \in \R$. 
Then methods from differential calculus show that $\max_{\xi \in \R} (1+(\xi -\omega)^2)^{-1/2}(1+\xi^2)^{-1/2} \le  C/(1+|\omega|)$.  
This gives the first estimate in the lemma.

As for the other estimate, we know that $(1+|\xi|) \le C(1+\xi^2)^{1/2}$ for all $\xi \in \R$. 
And differential calculus again tells us that 
$\max_{\xi \in \R} (1+(\xi -\omega)^2)^{-1/2}(1+\xi^2)^{1/2} \le C(1+ |\omega|)$.  
This completes the proof.
\end{proof}

Lemma~\ref{S est} immediately implies:

\begin{lemma}\label{refined lemma}
For all $b > 0$  there exists a constant $C_b > 0$ such that if $c$, $\mu$ and $\kappa$ are positive and meet~\eqref{KSX}, then 
\be\label{small estimates}
\| \Sigma_{c,\mu,\kappa} \|_{E^s_b \to O^{s-1}_b}  
\le {C_b \over  c^{2}  \omega_{c,\mu,\kappa}^2} \mand 
\| \Psi_{c,\mu,\kappa} \|_{E^s_b \to E^{s-1}_b}  
\le {C_b \kappa \over  c^{2}  \omega^4_{c,\mu,\kappa}} .
\ee
Likewise
\be\label{smooth estimates}
\| \Sigma_{c,\mu,\kappa} \|_{E^s_b \to O^{s+1}_b}  
\le {C_b \over  c^{2} } \mand 
\| \Psi_{c,\mu,\kappa} \|_{E^s_b \to E^{s+1}_b}  
\le {C_b \kappa \over  c^{2}  \omega^2_{c,\mu,\kappa}} .
\ee
\end{lemma}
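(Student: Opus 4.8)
The plan is to read off all four bounds directly from the closed-form symbol representations in~\eqref{KSX implies 2} and in the definition of $\tilde{\Psi}_{c,\mu,\kappa}$, by recognizing both operators as scalar multiples of the difference $S_\omega - S_{-\omega}$ (writing $\omega := \omega_{c,\mu,\kappa}$), and then invoking Lemma~\ref{S est} on each constituent $S_{\pm\omega}$. Concretely,~\eqref{KSX implies 2} says $\Sigma_{c,\mu,\kappa} = \frac{i\cos(\omega/2)}{2c^2\omega}(S_\omega - S_{-\omega})$, while the displayed formula for $\tilde{\Psi}_{c,\mu,\kappa}$ says $\Psi_{c,\mu,\kappa} = \frac{\kappa\cos(\omega/2)}{2ic^2\omega^3}\,\delta\,(S_\omega - S_{-\omega})$, since $\delta$ has symbol $2i\sin(\xi/2)$ and Fourier multipliers commute. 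In both prefactors $|\cos(\omega/2)| \le 1$, so the scalars out front contribute, up to an absolute constant, exactly $\frac{1}{c^2\omega}$ for $\Sigma_{c,\mu,\kappa}$ and $\frac{\kappa}{c^2\omega^3}$ for $\Psi_{c,\mu,\kappa}$.

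For the derivative-gaining bounds~\eqref{small estimates} I would apply the first inequality of Lemma~\ref{S est} to each of $S_\omega$ and $S_{-\omega}$, giving $\| S_\omega - S_{-\omega}\|_{H^s_b \to H^{s-1}_b} \le C_b/(1+\omega)$. For $\Sigma_{c,\mu,\kappa}$ this yields $\frac{C_b}{c^2\omega(1+\omega)} \le \frac{C_b}{c^2\omega^2}$, the last step using $\omega(1+\omega) \ge \omega^2$. For $\Psi_{c,\mu,\kappa}$ one further uses that $\delta$ is bounded on $H^{s-1}_b$ (its symbol is entire and bounded on horizontal strips, so Lemma~\ref{FMO estimate} with $r=0$ gives $\|\delta\|_{H^{s-1}_b \to H^{s-1}_b} \le C_b$), producing $\frac{C_b\kappa}{c^2\omega^3(1+\omega)} \le \frac{C_b\kappa}{c^2\omega^4}$. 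The derivative-losing bounds~\eqref{smooth estimates} are identical in structure but invoke the second inequality of Lemma~\ref{S est}, which replaces $1/(1+\omega)$ by $(1+\omega)$; since~\eqref{KSX} forces $\omega = 2n\pi \ge 2\pi$, we have $1+\omega \le C\omega$, so the $\Sigma_{c,\mu,\kappa}$ bound collapses to $\frac{C_b(1+\omega)}{c^2\omega} \le \frac{C_b}{c^2}$ and the $\Psi_{c,\mu,\kappa}$ bound to $\frac{C_b\kappa(1+\omega)}{c^2\omega^3} \le \frac{C_b\kappa}{c^2\omega^2}$.

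The only bookkeeping requiring care is the parity of the target spaces: one must land in $O^{s\mp 1}_b$ for $\Sigma_{c,\mu,\kappa}$ and in $E^{s\mp 1}_b$ for $\Psi_{c,\mu,\kappa}$, not merely in $H^{s\mp 1}_b$. Since $\sinc$ is even, the symbol of $S_\omega - S_{-\omega}$ is odd, so this operator carries $E^s_b$ into the odd subspace, which gives the stated ranges for $\Sigma_{c,\mu,\kappa}$ immediately; for $\Psi_{c,\mu,\kappa}$ the extra $\delta$, whose symbol is odd, flips parity a second time and returns the output to the even subspace. Because the parity subspaces are closed in $H^s_b$, their operator norms are dominated by the $H^s_b \to H^{s\mp 1}_b$ norms already estimated, so nothing new is needed. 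I anticipate no genuine obstacle: each bound is a one-line consequence of Lemma~\ref{S est} together with $\omega \ge 2\pi$, and the only mild subtlety is remembering to split $S_\omega - S_{-\omega}$ and to account for the hidden $\delta$ factor inside $\Psi_{c,\mu,\kappa}$.
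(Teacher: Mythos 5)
Your proposal is correct and takes essentially the same route as the paper: the paper introduces $S_\omega$ precisely as the ``constituent element'' of $\Sigma_{c,\mu,\kappa}$ and $\Psi_{c,\mu,\kappa}$ and then states Lemma~\ref{refined lemma} as an immediate consequence of Lemma~\ref{S est}, which is exactly the decomposition into scalar multiples of $S_\omega - S_{-\omega}$ (plus the bounded $\delta$ factor inside $\Psi_{c,\mu,\kappa}$) that you carry out. Your parity bookkeeping and the use of $\omega_{c,\mu,\kappa} = 2n\pi \ge 2\pi$ from~\eqref{KSX} simply make explicit what the paper leaves implicit.
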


Lemma~\ref{refined lemma} imputes a sort of ``Jeckyll \& Hyde'' identity to the operators. 
The estimates in~\eqref{small estimates} tell us that these operators are small when $\omega_{c,\mu,\kappa}$ is large, but that this smallness comes at a cost in regularity.
The estimates in~\eqref{smooth estimates} tell us that these operators are smoothing, but in this case they are not (as) small. 

\subsection{A perturbation lemma} 
Note that~\eqref{vi} is a perturbation of 
$c^2 \rho_1- \partial_x^{-2} \delta^2 V'(\rho_1) =0$.
It is an easy exercise to show the following.

\begin{lemma} If $c$ is a $V$-admissible wave speed, then:

\begin{enumerate}[label={(\roman*)}]

\item 
$c^2 \sigma - \partial_x^{-2} \delta^2 V'(\sigma) = 0$ and 

\item 
the operator
\be\label{L-c defn}
\L{f}
:= c^2f - \partial_x^{-2} \delta^2 V''(\sigma)f
\ee  
is a homeomorphism from $E^{2}_{\beta}$ to itself. 
\end{enumerate}
Moreover if $V$ is smooth then so is $\sigma$ and $\L$ is a homeomorphism from $E^{s}_{\beta}$ to itself for all $s \ge 0$.
\end{lemma}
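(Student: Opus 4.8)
The plan is to observe that $\L$ is nothing but $\H$ followed by a formal double antiderivative, and then to inherit both conclusions from the properties of $\H$ supplied by Definition~\ref{main assumption}. Comparing Fourier symbols (the symbol of $\partial_x^2\partial_x^{-2}$ is $1$ on the range where it acts, and $\delta^2$ has symbol $-4\sin^2(\xi/2)$) one checks the operator identity
\[
\partial_x^2 \L f = c^2 f'' - \delta^2\big(V''(\sigma)f\big) = \H f,
\]
so that $\L = \partial_x^{-2}\H$. The entire lemma therefore reduces to showing that $\partial_x^2$ and $\partial_x^{-2}$ are mutually inverse homeomorphisms between the correct pair of spaces, after which $\L$ is exhibited as a composition of homeomorphisms.

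First I would pin down the mapping properties of $\partial_x^{\pm2}$. The map $\partial_x^2 : E^2_\beta \to E^0_{\beta,0}$ is clearly bounded, and its image lands in the zero-mean subspace because $\int_\R f''\,dx = 0$ for $f \in E^2_\beta$, the boundary terms vanishing by the exponential decay. The one genuine point — and the step I expect to be the main (if mild) obstacle — is that the inverse $\partial_x^{-2}$, a Fourier multiplier with the singular symbol $-1/\xi^2$, is well defined on $E^0_{\beta,0}$. This rests on an elementary double-zero observation: if $g \in E^0_{\beta,0}$, then $\cosh(\cdot)^\beta g \in L^2$ forces $\hat g$ to extend analytically to the strip $\{|\Im \xi|\le\beta\}$, evenness of $g$ gives $\hat g'(0)=0$, and the zero-mean condition gives $\hat g(0)=0$; hence $\hat g$ has a double zero at the origin, $\hat g(\xi)/\xi^2$ is again analytic in the strip, and so $\partial_x^{-2}g \in E^2_\beta$ with $\partial_x^2\partial_x^{-2}=\partial_x^{-2}\partial_x^2 = \mathrm{id}$; boundedness of the inverse follows from the open mapping theorem. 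Conclusion~(i) is then immediate: applying $\partial_x^{-2}$ to the identity of Definition~\ref{main assumption}\ref{hypo 1}, whose left side $c^2\sigma'' - \delta^2 V'(\sigma)$ lies in $E^0_{\beta,0}$ (note $\delta^2$ always outputs even zero-mean functions), yields $c^2\sigma - \partial_x^{-2}\delta^2 V'(\sigma)=0$.

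For conclusion~(ii) I would simply compose: the invertibility hypothesis of Definition~\ref{main assumption} makes $\H$ a homeomorphism $E^2_\beta \to E^0_{\beta,0}$, and $\partial_x^{-2}$ is a homeomorphism $E^0_{\beta,0}\to E^2_\beta$, so $\L = \partial_x^{-2}\circ\H$ is a homeomorphism of $E^2_\beta$ onto itself. The smooth case is identical up to a shift of indices: since $\H : E^{s+2}_\beta \to E^s_{\beta,0}$ and $\partial_x^{-2} : E^s_{\beta,0}\to E^{s+2}_\beta$ are homeomorphisms for every $s\ge0$, the same composition shows $\L$ is a homeomorphism of $E^{s+2}_\beta$ onto itself, that is, of $E^t_\beta$ for every $t\ge 2$.

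The only range the composition misses is the low-regularity band $0\le t<2$ of the smooth statement, which I would recover by a routine bootstrap-plus-Fredholm argument. The multiplier $\partial_x^{-2}\delta^2$ has symbol $\sinc^2(\xi/2)$, which is $O(|\xi|^{-2})$ and analytic in every horizontal strip, so by Lemma~\ref{FMO estimate} it is smoothing of order two; consequently any $f\in E^t_\beta$ with $\L f=0$ obeys $c^2 f = \partial_x^{-2}\delta^2\big(V''(\sigma)f\big)\in E^{t+2}_\beta$, and iterating places $f$ in $E^2_\beta$, where injectivity of $\L$ is already known, forcing $f=0$. For surjectivity I would write $V''(\sigma)=V''(0)+w$ with $w$ exponentially decaying: the piece $\partial_x^{-2}\delta^2(w\,\cdot)$ gains two derivatives and a strictly better exponential weight and is therefore compact on $E^t_\beta$, so $\L$ is a compact perturbation of the constant-coefficient multiplier $c^2 - V''(0)\sinc^2(\xi/2)$. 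The latter is invertible because $\sinc^2(\xi/2)\in[0,1]$ and the existence of an exponentially localized profile $\sigma$ forces the supersonic relation $c^2>V''(0)$ (equivalently, the symbol of the asymptotic operator has no nonzero real root); hence $\L$ is Fredholm of index zero, and triviality of its kernel upgrades it to a homeomorphism on $E^t_\beta$ as well. The essential content remains, however, the double-zero observation legitimizing $\partial_x^{-2}$ on $E^0_{\beta,0}$; everything else is bookkeeping, which is why the result is fairly called an easy exercise.
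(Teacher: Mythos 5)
Your main line of argument is surely the intended one: the paper itself offers no proof (it calls the lemma ``an easy exercise''), and the factorization $\L = \partial_x^{-2}\circ\H$, with the double-zero observation (evenness forces $\hat g'(0)=0$, zero mean forces $\hat g(0)=0$) legitimizing $\partial_x^{-2}$ as the inverse of $\partial_x^2\colon E^2_\beta\to E^0_{\beta,0}$, is exactly what makes conclusion (i), conclusion (ii), and the smooth case for $s\ge2$ fall out of Definition~\ref{main assumption}. One caution on your Paley--Wiener step: analyticity of $\hat g(z)/z^2$ in the strip is not by itself enough; one needs uniform $L^2$ bounds on the horizontal lines, and extracting these near $z=0$ at the endpoint weight $\beta$ (rather than at every $\beta'<\beta$) takes some care. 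A cleaner route to surjectivity of $\partial_x^2$ is purely physical: set $G(x)=\int_{-\infty}^x g$ and $f(x)=\int_{-\infty}^x G$; the zero-mean condition makes $G$ decay on both sides, evenness gives $\int_\R G = -\int_\R y\,g(y)\,dy = 0$ so that $f$ decays as well, and on each half-line $\cosh^\beta |G|$ is dominated, up to a constant, by the convolution of the $L^1$ kernel $e^{-\beta u}\mathbf{1}_{u>0}$ with $\cosh^\beta|g|\in L^2$, so $G$ (and then $f$) lies in the weighted space with no loss in the weight; the open mapping theorem then finishes as you say.

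The genuine gap is in your treatment of the band $0\le s<2$ of the smooth case. The injectivity bootstrap is fine, but the Fredholm argument for surjectivity rests on two claims that do not follow from the stated hypotheses. First, you assert that admissibility forces the supersonic inequality $c^2>V''(0)$; this holds in all the examples listed in Section 2, but it is not a consequence of Definition~\ref{main assumption}, and you give no proof. Second, even granting $c^2>V''(0)$, invertibility of the asymptotic multiplier $c^2 - V''(0)\partial_x^{-2}\delta^2$ on $E^s_\beta$ requires its symbol to be bounded away from zero on the whole strip $\{|\Im z|\le\beta\}$, not merely on $\R$: your appeal to $\sinc^2(\xi/2)\in[0,1]$ covers only real $\xi$, whereas $\sinc^2(ib/2)=\bigl(\sinh(b/2)/(b/2)\bigr)^2$ grows without bound in $b$, so the symbol vanishes at some height $b^*$, and nothing in the definition prevents $\beta>b^*$. (An interior symbol zero genuinely destroys surjectivity: every function in the range would have a Fourier transform vanishing at that point of the strip, which not every element of $E^s_\beta$ does.) Fortunately the band can be closed with tools you already have in hand: given $g\in E^s_\beta$ with $0\le s<2$, any solution of $\L f = g$ must have the form $f = c^{-2}g+u$, and $\L f=g$ is equivalent to $\L u = c^{-2}\partial_x^{-2}\delta^2\bigl(V''(\sigma)g\bigr)$; the right-hand side lies in $E^{s+2}_\beta$ because $\partial_x^{-2}\delta^2$ is smoothing of order two, so the invertibility of $\L$ on $E^{s+2}_\beta$ --- already established, since $s+2\ge2$ --- produces $u$, hence $f$, together with the requisite bounds. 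This two-line reduction replaces the entire Fredholm apparatus and uses no hypotheses beyond those in Definition~\ref{main assumption}.
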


Since we now know from Lemma~\ref{main lemma} how big $ \Gamma_{c,\mu,\kappa}$ is in~\eqref{vi}, we will use the following result to find solutions.

\begin{lemma} \label{routine}
Assume that $V(r)$ is $C^5$ and that  $c$ is a $V$-admissible wave speed. 
Then there are positive constants $\alpha$ and $\tau$ so that for each linear operator $B:E^{2}_{\beta} \to E^2_{\beta}$ with  $\| B \|_{E_{\beta}^2 \to E^2_{\beta}} \le \tau$, there is a unique function $\rho \in  E^2_{\beta}$ for which
$$
c^2 \rho - (\partial_x^{-2} \delta^2 +B)V'(\rho) = 0 
\mand \| \sigma-\rho \|_{2,\beta} \le \alpha\|  B \|_{E_{\beta}^2 \to E^2_{\beta}}.
$$
\end{lemma}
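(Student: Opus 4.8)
The plan is to recast the problem as a fixed point equation for the difference $w := \rho - \sigma$ and solve it with the contraction mapping principle, treating $B$ as a small perturbation of the $B = 0$ problem, whose solution is $\sigma$. Writing $\rho = \sigma + w$ and Taylor expanding the substitution term as $V'(\sigma + w) = V'(\sigma) + V''(\sigma)w + \mathcal{N}(w)$, where $\mathcal{N}(w) := V'(\sigma+w) - V'(\sigma) - V''(\sigma)w$ collects the superlinear remainder, I would subtract the identity $c^2\sigma - \partial_x^{-2}\delta^2 V'(\sigma) = 0$ (the $\partial_x^{-2}$-regularized form of part (i), from the preceding lemma, available since $V'(0)=0$) from the equation to be solved. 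After the leading terms cancel and the $w$-linear terms with no $B$ assemble into $\L w$, invertibility of $\L$ on $E^2_\beta$ turns the equation into the fixed point problem $w = \mathcal{T}(w)$, where
\[
\mathcal{T}(w) := \L^{-1}\Big[ \partial_x^{-2}\delta^2\,\mathcal{N}(w) + B\,V'(\sigma + w)\Big].
\]

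The estimates needed to close the argument are the following. First, because $H^2(\R)$ embeds in $C^0 \cap L^\infty$ and the exponential weight is submultiplicative, $E^2_\beta$ is a Banach algebra, so the substitution operator $w \mapsto V'(\sigma+w)$ is smooth from $E^2_\beta$ into $E^2_\beta$; this is where the $C^5$ hypothesis on $V$ is consumed, the derivative loss coming from differentiating the $H^2$ norm and from the Taylor remainder. In particular it is Lipschitz on bounded sets and the remainder obeys $\|\mathcal{N}(w)\|_{2,\beta} \le C\|w\|_{2,\beta}^2$ together with $\|\mathcal{N}(w_1) - \mathcal{N}(w_2)\|_{2,\beta} \le C(\|w_1\|_{2,\beta} + \|w_2\|_{2,\beta})\|w_1 - w_2\|_{2,\beta}$ on a fixed ball; the standing hypothesis $V'(0) = 0$ ensures $V'(\sigma + w) \in E^2_\beta$, so that $B$ may act on it. Second, $\partial_x^{-2}\delta^2$ is bounded on $E^2_\beta$, since its symbol $\sinc^2(\xi/2)$ is bounded and analytic in every horizontal strip and Lemma~\ref{FMO estimate} applies; it preserves evenness, as do $\L^{-1}$ and (by hypothesis) $B$, so $\mathcal{T}$ maps even functions to even functions and the fixed point automatically lands in $E^2_\beta$.

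With these in hand I would run the contraction on a ball $\{\|w\|_{2,\beta} \le R_0\}$ for a fixed small $R_0$. Using $\|\L^{-1}\|$, boundedness of $\partial_x^{-2}\delta^2$, the quadratic bound on $\mathcal{N}$, and $\|B\,V'(\sigma+w)\|_{2,\beta} \le \tau(\|V'(\sigma)\|_{2,\beta} + C R_0)$, one obtains
\[
\|\mathcal{T}(w)\|_{2,\beta} \le C_1 R_0^2 + C_2 \tau,
\qquad
\|\mathcal{T}(w_1) - \mathcal{T}(w_2)\|_{2,\beta} \le (C_3 R_0 + C_4 \tau)\|w_1 - w_2\|_{2,\beta}.
\]
Fixing $R_0$ small enough that $C_1 R_0 \le 1/4$ and $C_3 R_0 \le 1/4$, then $\tau$ small enough that $C_2\tau \le R_0/2$ and $C_4\tau \le 1/4$, makes $\mathcal{T}$ a self-map of the ball and a contraction, producing a unique fixed point $w$ and hence a unique $\rho = \sigma + w$. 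I would then read off the advertised estimate directly from $w = \mathcal{T}(w)$: since the quadratic term contributes at most $C\|w\|_{2,\beta}^2 \le C R_0 \|w\|_{2,\beta}$ and the $B$ term at most $C\|B\|(\|V'(\sigma)\|_{2,\beta} + C R_0)$, absorbing the first into the left-hand side (shrinking $R_0$ if necessary) yields $\|w\|_{2,\beta} \le \alpha\|B\|_{E^2_\beta \to E^2_\beta}$ for a suitable $\alpha$, which is exactly $\|\sigma - \rho\|_{2,\beta} \le \alpha\|B\|$.

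I expect the only genuine obstacle to be the substitution-operator estimates: verifying that $w \mapsto V'(\sigma + w)$ is smooth with the stated quadratic and Lipschitz remainders on the weighted, even Sobolev space $E^2_\beta$, and that its range sits inside $E^2_\beta$ (which is what forces $V'(0)=0$). Everything downstream---boundedness of $\partial_x^{-2}\delta^2$, invertibility of $\L$, and the contraction bookkeeping---is \emph{routine}, consistent with the name of the lemma.
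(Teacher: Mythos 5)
Your proposal is correct and is essentially the paper's own proof: the same decomposition $\rho = \sigma + \eta$, the same fixed point operator $\L^{-1}\bigl[\partial_x^{-2}\delta^2\bigl(V'(\sigma+\eta)-V'(\sigma)-V''(\sigma)\eta\bigr) + BV'(\sigma+\eta)\bigr]$, the same reliance on the substitution operator estimates (the paper's Lemma~\ref{subs lemma}) and on boundedness of $\partial_x^{-2}\delta^2$, closed by Banach's fixed point theorem. The only cosmetic difference is bookkeeping: the paper contracts on the ball of radius $\alpha\|B\|_{E^2_\beta\to E^2_\beta}$, so the estimate $\|\sigma-\rho\|_{2,\beta}\le\alpha\|B\|_{E^2_\beta\to E^2_\beta}$ falls out of the location of the fixed point, whereas you contract on a fixed ball of radius $R_0$ and then recover the same estimate a posteriori by absorbing the quadratic term.
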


\begin{proof} 
Let $\rho = \sigma + \eta$ and substitute this into $c^2 \rho - (\partial_x^{2-}\delta^2 +B)V'(\rho) = 0.$ 
Recalling the definition of $\L$ from~\eqref{L-c defn}, one finds that $\eta$ solves
$$
\L \eta = \partial_x^{-2} \delta^2 \left( V'(\sigma + \eta) -V'(\sigma)-  V''(\sigma) \eta\right)  +B V'(\sigma + \eta).
$$
Since $V'(0) = 0$, we have $V'(f) \in E_{\beta}^2$ for any $f \in E_{\beta}^2$ (see Lemma~\ref{subs lemma} below).
The operators $\partial_x^{-2} \delta^2$ and $B$ both map $E^2_{\beta}$ into  itself and so we can invert $\L$ to get
$$
\eta = \L^{-1} \left( \partial_x^{-2}\delta^2 \left( V'(\sigma + \eta) -V'(\sigma)-  V''(\sigma) \eta\right)\right)  +\L^{-1}B V'(\sigma + \eta) =:R(\eta).
$$

We now show that $R$ is a contraction on an appropriate closed ball in $E_{\beta}^2$.  
We first use the the boundedness of $\L^{-1}$, $\partial_x^{-2} \delta^2$ and $B$ to get
\be\label{R map 0}
\| R (\eta)\|_{2,\beta} \le C\|V'(\sigma+\eta) - V'(\sigma) - V''(\sigma) \eta\|_{2,\beta} + C\| B \|_{E_{\beta}^2 \to E^2_{\beta}} \|V'(\sigma+\eta)\|_{2,\beta}
\ee
and
\be\label{R lip 0}\begin{split}
&\| R (\eta) - R(\gamma)\|_{2,\beta}\\ \le &C\|\left( V'(\sigma+\eta) - V'(\sigma) - V''(\sigma) \eta\right) -\left( V'(\sigma+\gamma) - V'(\sigma) - V''(\sigma) \gamma\right)\|_{2,\beta}\\  &+C\| B \|_{E_{\beta}^2 \to E^2_{\beta}} \|V'(\sigma+\eta)-V'(\sigma+\gamma)\|_{2,\beta}.
\end{split}
\ee

Next we need the following substitution operator estimates, which can be deduced from Propositions B.3 and B.6 in~\cite{faver-spring-dimer}.

\begin{lemma} \label{subs lemma} 
Fix $b\ge 0$, $P>0$ and $s \ge 1$.

\begin{enumerate}[label={(\roman*)}]

\item
Suppose that $F \colon \R \to \R$ is $C^{s+1}$ and $F(0) = 0$.
Then $F(f) \in E_b^s$ and there is $C > 0$ such that if $\| f \|_{s,b} \le P$, then $\| F(f) \|_{s,b} \le C\| f \|_{s,b}$.

\item
Suppose that $F: \R \to \R$ is $C^{s+2}$.  
For $f,g \in E^s_b$ let $L_f(g):=F(f+g) - F(f)$ and $Q_f(g):=F(f+g) - F(f) - F'(f)g$.
Then there exists $C>0$ such that 
for all $f,g_1,g_2 \in H^s_b$ with $\| f\|_{s,b}, \| g_1\|_{s,b}, \|g_2\|_{s,b} \le P$ we have
$$
\| L_f(g_1)-L_f(g_2))\|_{s,b} \le C\| g_1-g_2 \|_{s,b}
$$
and
$$
\| Q_f(g)  -Q_f(g_2) \|_{s,b} \le C\left(\|g_1\|_{s,b} + \|g_2\|_{s,b} \right)\| g_1-g_2\|_{s,b}.
$$
\end{enumerate}
\end{lemma}

Since we have assumed that $V$ is $C^5$ and $V'(0)=0$ we can apply these estimates in Lemma~\ref{subs lemma} to the right hand sides of~\eqref{R map 0} and~\eqref{R lip 0} to get
\be\label{R map 1}
\|R(\eta)\|_{2,\beta}
\le {1 \over 2} \alpha\left( \|\eta\|^2_{2,\beta}+ \|  B \|_{E^2_{\beta} \to E^2_{\beta}}\right) 
\ee
and
\be\label{R lip 1}
\|R(\eta) - R(\gamma)\|_{2,\beta}
\le {1 \over 2} \alpha\left( \| \eta\|_{2,\beta}+\| \gamma\|_{2,\beta}+ \|  B \|_{E^2_{\beta} \to E^2_{\beta}} \right)\| \eta - \gamma\|_{2,\beta},
\ee
for some $\alpha > 0$ 
so long as  $\eta,\gamma \in E^2_\beta$ with $\| \eta \|_{2,\beta},\| \gamma \|_{2,\beta} \le 1$. 

Next, define 
\[
\tau
:= \min\left\{\frac{1}{\alpha}, \frac{1}{\alpha^2}, \frac{1}{2 \alpha^2+\alpha}\right\}.
\]
Assume $\|  B \|_{E^2_{\beta} \to E^2_{\beta}} \le \tau$ and $\eta,\gamma \in E^2_\beta$ with $\|\eta\|_{2,\beta},\|\gamma\|_{2,\beta}\le \alpha \|  B \|_{E^2_{\beta} \to E^2_{\beta}}$. 
Thus~\eqref{R map 1} implies
\[
\|R(\eta)\|_{2,\beta}
\le {1 \over 2} \alpha\left( \alpha^2 \|  B \|_{E^2_{\beta} \to E^2_{\beta}} + 1\right)\|  B \|_{E^2_{\beta} \to E^2_{\beta}}
\le \alpha\| B \|_{E_{\beta}^2 \to E^2_\beta}
\]
and~\eqref{R lip 1} implies
\[
\|R(\eta) - R(\gamma)\|_{2,\beta}
\le  {1 \over 2} \left(2 \alpha^2 + \alpha \right)\|  B \|_{E^2_{\beta} \to E^2_{\beta}} \| \eta - \gamma\|_{2,\beta}
\le \frac{1}{2}\| \eta - \gamma\|_{2,\beta}.
\]
That is, $R$ is a contraction on the ball of radius $\alpha \|  B \|_{E^2_{\beta} \to E^2_{\beta}}$ centered at the origin in $E^2_{\beta}$, and so Banach's fixed point theorem provides us with the solution of our equation.
\end{proof}

\subsection{The final steps for the proof of Theorem~\ref{mass thm}}
Suppose that $c\ne0$ is $V$-admissible and take $\sigma$ and $\beta$ as in Definition~\ref{main assumption}.
Fix $\kappa>0$. Take $\mu = \mu_n$ as in the statement of Theorem~\ref{mass thm} so that~\eqref{KSX} is satisfied.
Thus we have $\omega_{c,\mu_n,\kappa} = 2 n \pi$ for $n \in \N$. Let $B_n:=\Gamma_{c,\mu_n,\kappa}$. 
Lemma~\ref{main lemma} then implies that
$$
\| B_n \|_{E^s_\beta \to E^s_\beta} \le  {C_\beta \kappa/c^2 \omega_{c,\mu_n,\kappa}^2} \le {C/n^2}.
$$
And so for $n$ sufficiently large, $\|B_n\|_{E^s_\beta \to E^s_\beta}$ falls below the threshold $\tau$ from Lemma~\ref{routine} and that result conjures up $\rho_1 \in E^2_\beta$ for which $c^2 \rho_1 - (\partial_x^{-2} \delta^2 + B_n) V'(\rho_1) = 0$ and
\be\label{eta est}
\|\sigma - \rho_1\|_{2,\beta} \le \alpha \| B_n \|_{E^s_\beta \to E^s_\beta} \le C/n^2.
\ee

Next put $\rho_2 = \Sigma_{c,\mu_n,\kappa} V'(\rho_1)$ as in~\eqref{FMO reduction}.
From Lemma~\ref{main lemma} we know that 
\[
\|  \Sigma_{c,\mu_n,\kappa} \|_{E^2_\beta \to O^2_\beta} 
\le C/\omega_{c,\mu_n,\kappa} 
\le C/n.
\]
Since $\rho_1$ is even, so is $V'(\rho_1)$ and  we have $\rho_2 \in O^2_\beta$ with $\| \rho_2 \|_{2,\beta} \le C \| V'(\rho_1)\|_{2,\beta}/n$. 
Then we use the substitution operator estimates in Lemma~\ref{subs lemma} to get $ \| V'(\rho_1)\|_{2,\beta} \le C\| \rho_1 \|_{2,\beta} \le C$. 
Therefore $\| \rho_2 \|_{2,\beta} \le C /n$.
Unraveling the steps which lead from~\eqref{traveling} to~\eqref{FMO reduction} and~\eqref{vi} shows that $\rho_1$ and $\rho_2$ solve~\eqref{traveling} and we have conclusions~\ref{mass concl 1},~\ref{mass concl 2} and~\ref{mass concl 3} of Theorem~\ref{mass thm}.

Now we prove the amplification of the result when $V$ is smooth; the main additional piece of information here is that $\sigma$ is smooth. 
The first step is to prove the stronger estimate for $\|\rho_2\|_{2,\beta}$.
Since $\rho_2$ satisfies~\eqref{FMO reduction} we have
\be\label{rho2 eqn}
\rho_2 = \Sigma_{c,\mu_n,\kappa} V'(\sigma) + \Sigma_{c,\mu_n,\kappa} \left[ V'(\rho_1) - V'(\sigma) \right].
\ee
Taking the $H^2_\beta$ norm of both sides gives:
$$
\| \rho_2 \|_{2,\beta} \le \|  \Sigma_{c,\mu_n,\kappa} V'(\sigma)\|_{2,\beta} + \|\Sigma_{c,\mu_n,\kappa} \left[ V'(\rho_1) - V'(\sigma) \right]\|_{2,\beta} .
$$
On the first term we use the estimate~\eqref{small estimates} for $\Sigma_{c,\mu,\kappa}$ in Lemma~\ref{refined lemma} to get
$$ 
\|  \Sigma_{c,\mu_n,\kappa} V'(\sigma)\|_{2,\beta} \le C \omega_{c,\mu_n,\kappa}^{-2} \| V'(\sigma)\|_{3,\beta} \le C/n^{2}.
$$
Note that it was essential here that $\sigma$ be smooth. 
As for the second term, the estimate for $\Sigma_{c,\mu,\kappa}$ in Lemma~\ref{main lemma} gives
$$
\|\Sigma_{c,\mu_n,\kappa} \left[ V'(\rho_1) - V'(\sigma) \right]\|_{2,\beta}\le C \omega_{c,\mu_n,\kappa}^{-1} \|  V'(\rho_1) - V'(\sigma) \|_{2,\beta} \le 
C\|  V'(\rho_1) - V'(\sigma) \|_{2,\beta}/n.
$$
Substitution operator estimates from Lemma~\ref{subs lemma}, together with~\eqref{eta est}, give us 
\[
\|  V'(\rho_1) - V'(\sigma) \|_{2,\beta} 
\le C \| \rho_1 - \sigma\|_{2,\beta} 
\le C/n^2.
\]
Thus we have $\| \rho_2 \|_{2,\beta} \le C/n^2$.

The completion of the proof is done via a bootstrap argument.  
Let $\eta:=\rho_1 - \sigma$. 
Since $c^2 \sigma'' = \delta^2 V'(\sigma)$ and $\rho_1$ solves~\eqref{i}, we see that 
$$
\eta'' = c^{-2} \delta^2 [V'(\rho_1) - V'(\sigma)] + c^{-2}\kappa \delta \rho_2.
$$
This implies, using the same sort of reasoning as above, that
\be\label{ind 1}
\|\eta\|_{s+2,\beta} \le C \| \eta \|_{s,\beta} + C\|\rho_2 \|_{s,\beta}.
\ee

Last, we estimate the $\rho_2$ term in~\eqref{ind 1}.
If we apply the $\Sigma_{c,\mu,\kappa}$ estimate~\eqref{small estimates} to the first term in~\eqref{rho2 eqn}, again relying on the smoothness of $\sigma$, and crudely estimate the second using the $\Sigma_{c,\mu,\kappa}$ estimate~\eqref{smooth estimates} and the substitution operator estimates, we get
\be\label{ind 2}
\|\rho_2\|_{s+1,\beta} \le C/n^2 + C \| \eta\|_{s,\beta}.
\ee
Since we know that $\|\eta\|_{2,\beta} = \|\rho_1 - \sigma\|_{2,\beta} \le C/n^2$ and $\|\rho_2 \|_{2,\beta} \le C/n^2$, the estimates~\eqref{ind 1} and~\eqref{ind 2} imply by induction that $\|\rho_1 - \sigma\|_{s,\beta} + \|\rho_2 \|_{s,\beta} \le C/n^2$ for all $s\ge0$. 
We have completed the proof of Theorem~\ref{mass thm}.

\subsection{The final steps for the proof of Theorem~\ref{stiff thm}}
Fix $\mu>0$. 
Suppose that $c\ne0$ is $(1+\mu)^{-1}V$-admissible and take $\sigma$ and $\beta$ as in that definition.
Take $\kappa = \kappa_n$ as in the statement of Theorem~\ref{stiff thm} so that~\eqref{KSX} is satisfied.
Thus we have $\omega_{c,\mu,\kappa_n} = 2 n \pi$ for $n \in \N$. 

In this case we have $\kappa_n/c^2 \omega_{c,\mu,\kappa_n}^2 = \mu/(1+\mu)$ and so~\eqref{KSX implies} tells us
$$
\Gamma_{c,\mu,\kappa_n} = -{\mu/(1+\mu)} \partial_x^{-2} \delta^2 + \Psi_{c,\mu,\kappa_n}.
$$
With this,~\eqref{vi} becomes, after simplification,
\be\label{vi stiff}
c^2 \rho_1 - (\partial_x^{-2} \delta^2  + (1+\mu) \Psi_{c,\mu,\kappa_n} ) \left( {1 \over 1+\mu} V'(\rho_1) \right) =0.
\ee
Now put $B_n:=(1+\mu) \Psi_{c,\mu,\kappa_n}$. 
Lemma~\ref{main lemma} implies
\[
\| B_n \|_{E^2_\beta \to E^2_\beta} 
\le C_\beta \kappa_n(1+\mu) /c^2 \omega_{c,\mu,\kappa_n}^3 
\le C/n.
\]
Thus for $n$ big enough we can call on Theorem~\ref{routine} and get $\rho_1 \in E^2_\beta$ for which~\eqref{vi stiff} and
\be\label{eta est stiff}
\|\sigma - \rho_1\|_{2,\beta} \le \alpha \| B_n \|_{E^s_\beta \to E^s_\beta} \le C/n.
\ee
We put $\rho_2 = \Sigma_{c,\mu,\kappa_n} V'(\rho_1)$ as in~\eqref{FMO reduction} and following the same steps as in the previous section we find $\| \rho_2 \|_{2,\beta} \le C /n$. 
Likewise we get conclusions~\ref{stiff concl 1},~\ref{stiff concl 2} and~\ref{stiff concl 3} of Theorem~\ref{stiff thm}.

The improvements when $V$ is smooth are a bit trickier in this setting. 
As above, if $V$ is smooth, so is $\sigma$ and this will drive the argument. 
Letting $\eta := \rho_1 - \sigma$ we find that $\eta$ solves (as in the proof of Lemma~\ref{routine})
$$
\eta = (1+\mu)^{-1} \L^{-1} \left( \partial_x^{-2}\delta^2 \left( V'(\sigma + \eta) -V'(\sigma)-  V''(\sigma) \eta\right)\right)  + \L^{-1} \Psi_{c,\mu,\kappa_n} V'(\sigma + \eta).
$$
Take the $H^s_\beta$ (with $s\ge 3$) norm of both sides to get
\be\label{1 and 2}
\| \eta \|_{s,\beta} \le C\bunderbrace{ \| \partial_x^{-2} \delta^2  \left( V'(\sigma + \eta) -V'(\sigma)-  V''(\sigma) \eta\right)\|_{s,\beta}}{I} + C\bunderbrace{\|\Psi_{c,\mu,\kappa_n} V'(\sigma + \eta)\|_{s,\beta}}{\II} .
\ee
We have used the fact that $\L$ is a homeomorphism here. 
Next, we estimate $\partial_x^{-2} \delta^2=S_0^2$ with Lemma~\ref{S est} and then use Lemma~\ref{subs lemma} to bound $I \le C \|\eta\|_{H^{s-2}_\beta}^2.$

As for $\II$, we use the triangle inequality to get
\be\label{II}
\II \le  \| \Psi_{c,\mu,\kappa_n} (V'(\sigma+\eta) - V'(\sigma)) \|_{s,{\beta}} +  \| \Psi_{c,\mu,\kappa_n} V'(\sigma) \|_{s,{\beta}}.
\ee
Using the smoothing estimate~\eqref{smooth estimates} for $\Psi_{c,\mu,\kappa}$ on the first term and the estimate~\eqref{small estimates} for $\Psi_{c,\mu,\kappa}$ on the other term gets us
\be\label{II bound 2}
\II \le C \kappa_n \omega_{c,\mu,\kappa_n}^{-2} \|V'(\sigma+\eta) - V'(\sigma) \|_{{s-1},{\beta}} + C \kappa_n \omega_{c,\mu,\kappa_n}^{-4} \|V'(\sigma) \|_{{s+1},{\beta}}
\ee
Then Lemma~\ref{subs lemma} gives
$$
\II \le C \kappa_n \omega_{c,\mu,\kappa_n}^{-2} \|\eta \|_{{s-1},{\beta}}  + C \kappa_n \omega_{c,\mu,\kappa_n}^{-4} \|\sigma \|_{{s+1},{\beta}}.
$$
Since $\kappa_n = \O(n^2)$ and $\omega_{c,\mu,\kappa_n} = \O(n)$ we have
$$
\II \le C  \|\eta \|_{{s-1},{\beta}} + C/ n^{2}. 
$$
All together then we have
$$
\| \eta \|_{s,\beta}  \le C \|\eta\|_{{s-2},\beta}^2 + C\|\eta\|_{{s-1},\beta} + C/n^2.
$$
Since we know from above that $\|\eta\|_{2,\beta} \le C/n$, a simple induction argument using the above estimate tells us that
\be\label{crude}
\| \eta \|_{s,\beta} \le C/n 
\ee
for all $s\ge2$.

Now we estimate $\II$ again, but slightly differently. Using~\eqref{small estimates} for $\Psi_{c,\mu,\kappa}$ on both terms in~\eqref{II bound 2}:
$$
\II 
\le C \kappa_n \omega_{c,\mu,\kappa_n}^{-4} \|\eta \|_{{s+1},{\beta}} + C \kappa_n \omega_{c,\mu,\kappa_n}^{-4} \|\sigma \|_{{s+1},{\beta}} 
\le C( \|\eta \|_{{s+1},{\beta}}  +1 )/n^2.
$$
Thus we have from~\eqref{1 and 2}
$$
\| \eta \|_{s,\beta}  
\le C \|\eta\|_{{s-2},\beta}^2 +C( \|\eta \|_{{s+1},{\beta}}  +1 )/n^2.
$$
Using~\eqref{crude} converts this to
$$
\| \eta \|_{s,\beta}  
\le C/n^2
$$
for all $s \ge 2$.
Since $\rho_2 = \Sigma_{c,\mu,\kappa_n} V'(\rho_1)$, it follows from this last estimate (as in the proof of Theorem~\ref{mass thm}) that $\| \rho_2\|_{s,\beta} \le C/n^2$, for all $s \ge 2$.

\section{Numerical Explorations}\label{sec:numerics}
In this section we both confirm and extend the conclusions of the paper's two main theorems. 
In particular, both theorems concern the existence of antiresonant traveling waves on particular sets of points in the parameter space, so it is of interest to determine numerically whether the conclusions remain true more generally. 

To solve for the traveling waves numerically, we fix an admissible velocity $c>1$ and an integer $n\ge 1$ and choose the parameters $\kappa$ and $\mu$ to satisfy assumption~\eqref{KSX}.  We then solve the regularized form of the equations~\eqref{vi} for $\rho_1$.
Afterward, $\rho_2$ is found by numerically evaluating~\eqref{FMO reduction}. 
As a further check on the procedure, the numerical solution is then substituted directly into system~\eqref{traveling}, and the pointwise error is found to be within the numerical tolerance of the solver. 
Subsequently, the solution is downsampled to the discrete variables $R_j$ and $r_j$ using equation~\eqref{ansatz}, and the initial value problem for the displacement coordinates is solved numerically using the sixth-order in time symplectic method of Yoshida~\cite{yoshida}.
The solutions are found to propagate without distortion over large distances. 

The result of one such numerical solution and subsequent time-integration is shown in Figure~\ref{fig:Rjt}. 
The traveling wave was computed on $-16<x\le16$ with 256 points, with parameters $c=2$, $\mu=0.4$ and $n=1$, and periodic boundary conditions. 
The downsampled values of $R_j$ and $r_j$ are used as initial conditions for the time-dependent problem in displacement coordinates. 
The solution is shown propagating one time around the periodic domain, but in simulations, we have observed the solution traveling around the domain many more times without distortion. 
In this simulation, the solution at the final time agrees with the initial condition with a pointwise error about $10^{-9}$.

\begin{figure}[htbp] 
\centering
\includegraphics[width=0.4\textwidth]{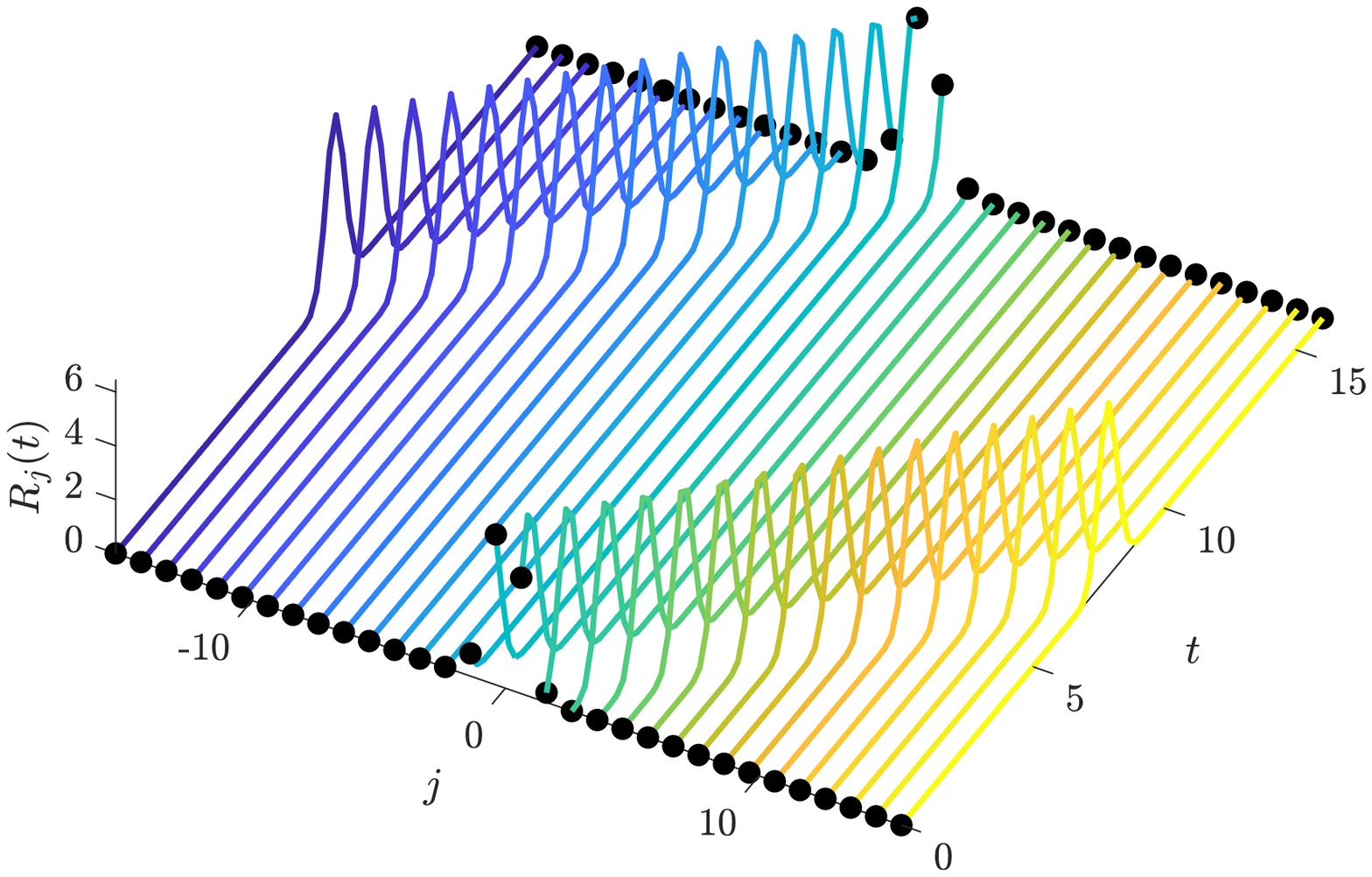}
\caption{The evolution of $R_j(t)$, with initial condition given by a solution to the traveling wave problem~\eqref{traveling}. The parameters in~\eqref{UrMiM} are $\mu=0.4$ and $\kappa\approx 45.12$, leading to a traveling wave profile in~\eqref{traveling} with $c=2$, corresponding to $n=1$ in~\eqref{omega-defn}. Initial and final conditions marked with black circles. Each curve represents the evolution of one bead $R_j$.}
\label{fig:Rjt}
\end{figure}

Equation~\eqref{KSX} relating the admissible values of $\mu$ and $\kappa$ for a given $c>1$ and $n\ge 1$ is shown in Figure~\ref{fig:kappa_mu}. 
Although Theorems~\ref{mass thm} and~\ref{stiff thm} only guarantee the existence of traveling waves when the integer constant $n$ in~\eqref{KSX} is sufficiently large, we have successfully found numerical solutions wherever we have looked in parameter space, for all values of $n$ including $n=1$.  
Figure~\ref{fig:kappa20} shows numerical solutions for $c=1.25$, $\kappa=20$ and  $n=1,2,3$. 
For $n>3$, $\rho_1$ is difficult to distinguish from $\sigma$, the traveling wave of the limiting FPUT equation. 
The $L^2$ distance between the solution and the limiting FPUT traveling wave is shown to converge like $1/n^2$. 
A similar calculation for $\mu=0.4$ and increasing $n$ is shown in Figure~\ref{fig:mu0p4}. 
Finally, the two theorems say nothing about the behavior the one-parameter families as $\mu$ and $\kappa$ are varied along the curves of fixed $n$. 
Figure~\ref{fig:mu_vs_d_nVaries} shows evidence that the solutions on these one-parameter families are $\mathcal{O}(\mu)$ close to the traveling wave of FPUT as $\mu\to0$, grow linearly in $\mu$ away from zero, and that as $n$ increases, the curves coalesce, consistently with Theorem~\ref{stiff thm}.

\begin{figure}[htbp] 
\centering
\includegraphics[width=0.5\textwidth]{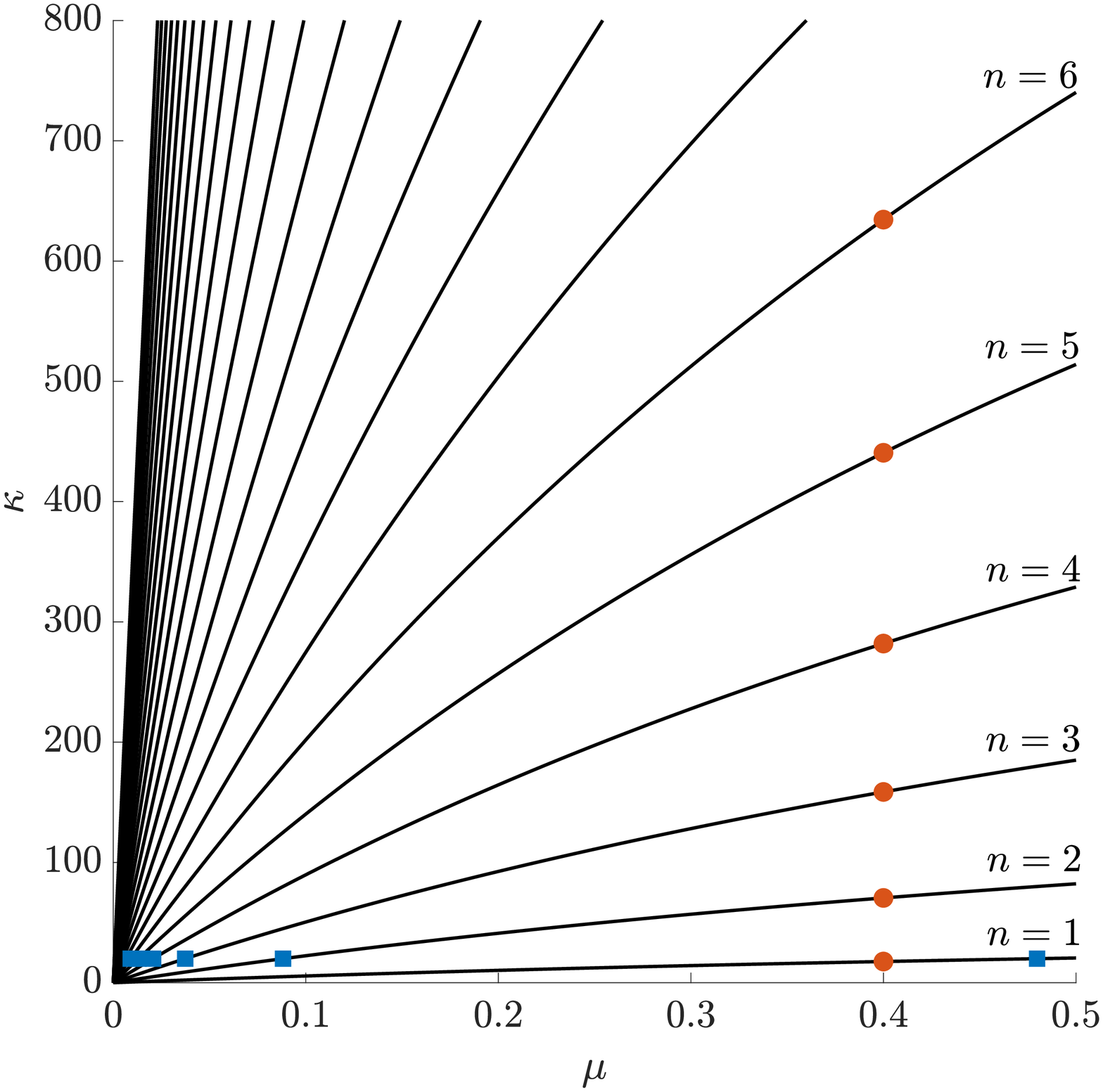} 
\caption{Curves of antiresonant $\mu$ and $\kappa$ for $c=1.25$. 
The blue squares illustrate a sequence of values with fixed $\kappa$ and decreasing mass $\mu\to 0$ as $n\to \infty$, satisfying the hypotheses of Theorem~\ref{mass thm}. 
Corresponding numerical solutions are shown in Figure~\ref{fig:kappa20}. 
The red dots mark a sequence of points with fixed $\mu=0.4$, and increasing stiffness $\kappa\to\infty$ as $n\to\infty$, satisfying the hypotheses of Theorem~\ref{stiff thm}. 
Corresponding numerical solutions are shown in Figure~\ref{fig:mu0p4}.}
\label{fig:kappa_mu}
\end{figure}

\begin{figure}[htbp] 
\centering
\includegraphics[width=0.4\textwidth]{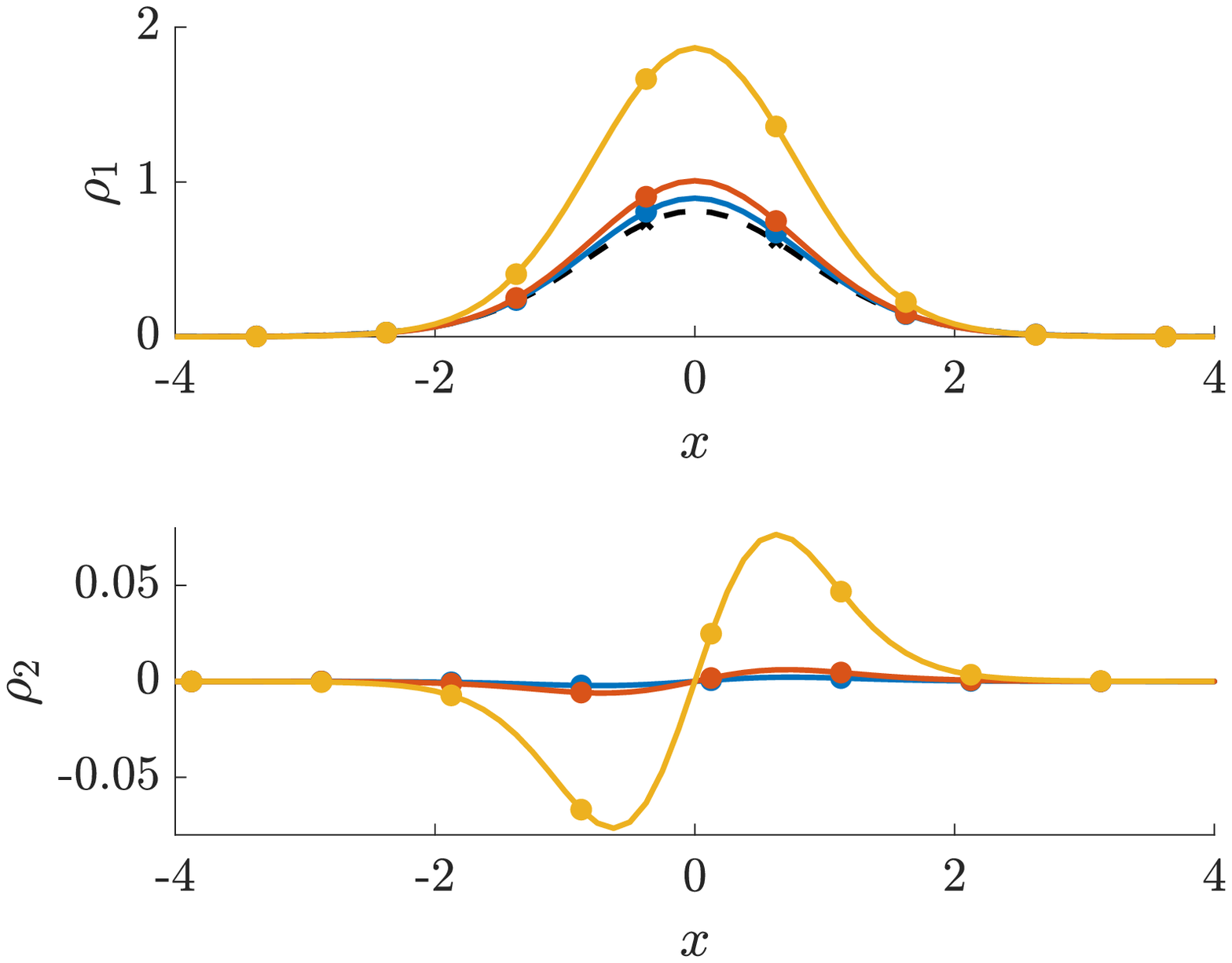} 
\includegraphics[width=0.4\textwidth]{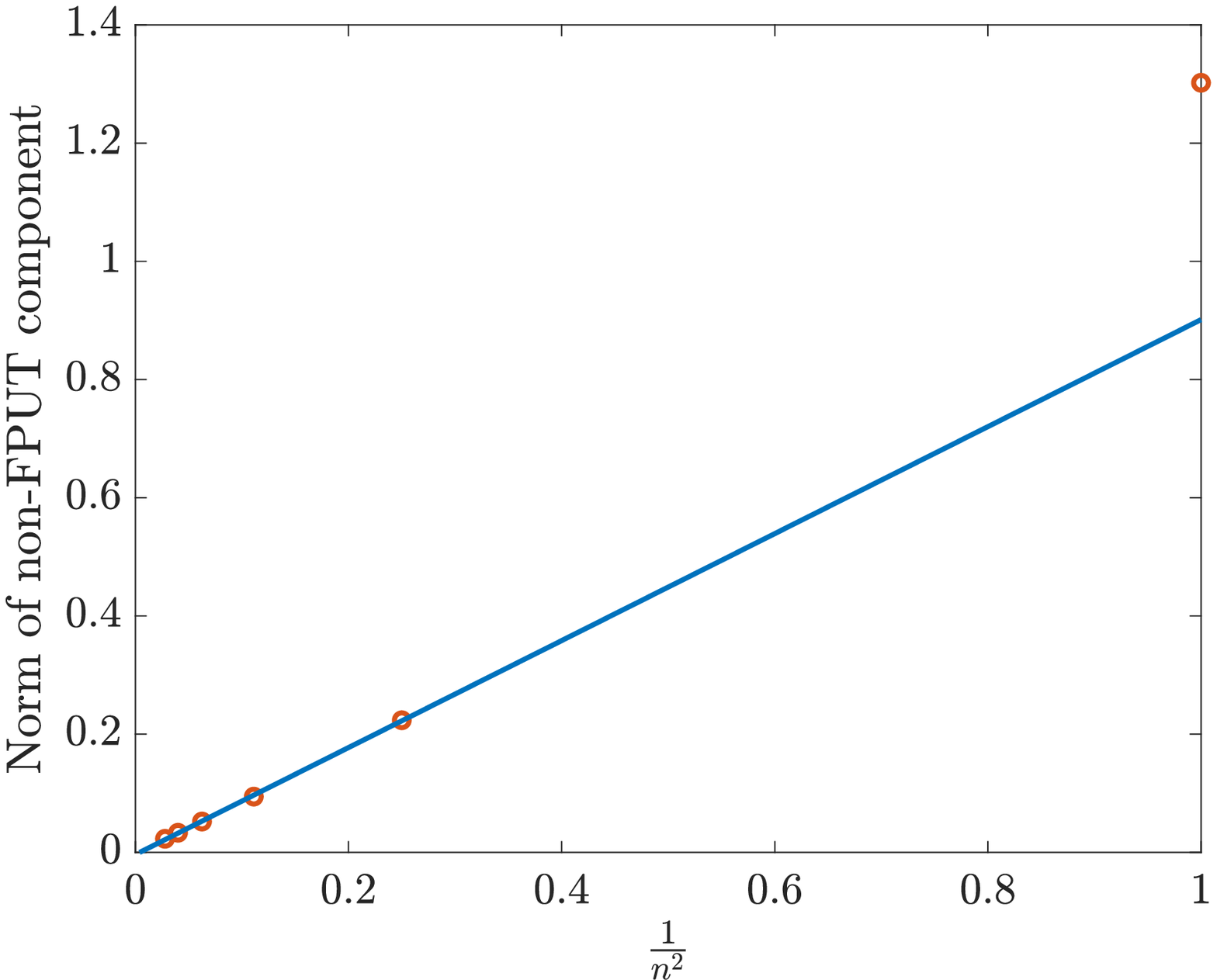} 
\caption{Left: Numerically obtained traveling waves solutions with $c=1.25$, $\kappa=20$ and $n=3,2,1$ (in blue, red and yellow), along with the limiting FPUT traveling wave $\sigma$ (black dashed). 
The discrete points forming an initial condition for $(R_j,r_j)$ are represented as dots. 
Right: A plot of $\| \sigma - \rho_1\|_{L^2} + \|\rho_2\|_{L^2}$ vs.\@ $1/n^2$, showing rapid convergence of solutions.}
\label{fig:kappa20}
\end{figure}

\begin{figure}[htbp] 
\centering
\includegraphics[width=0.4\textwidth]{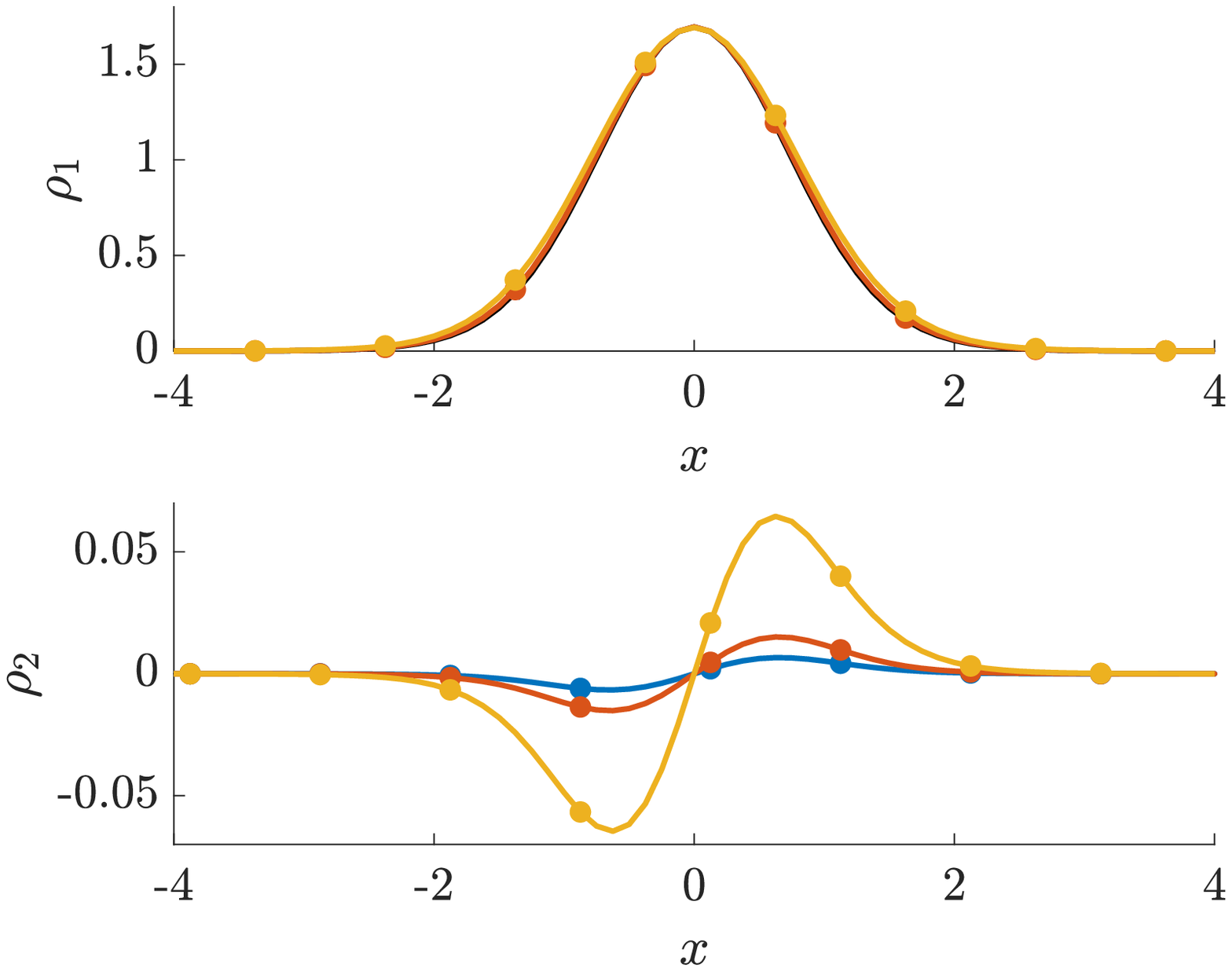} 
\includegraphics[width=0.4\textwidth]{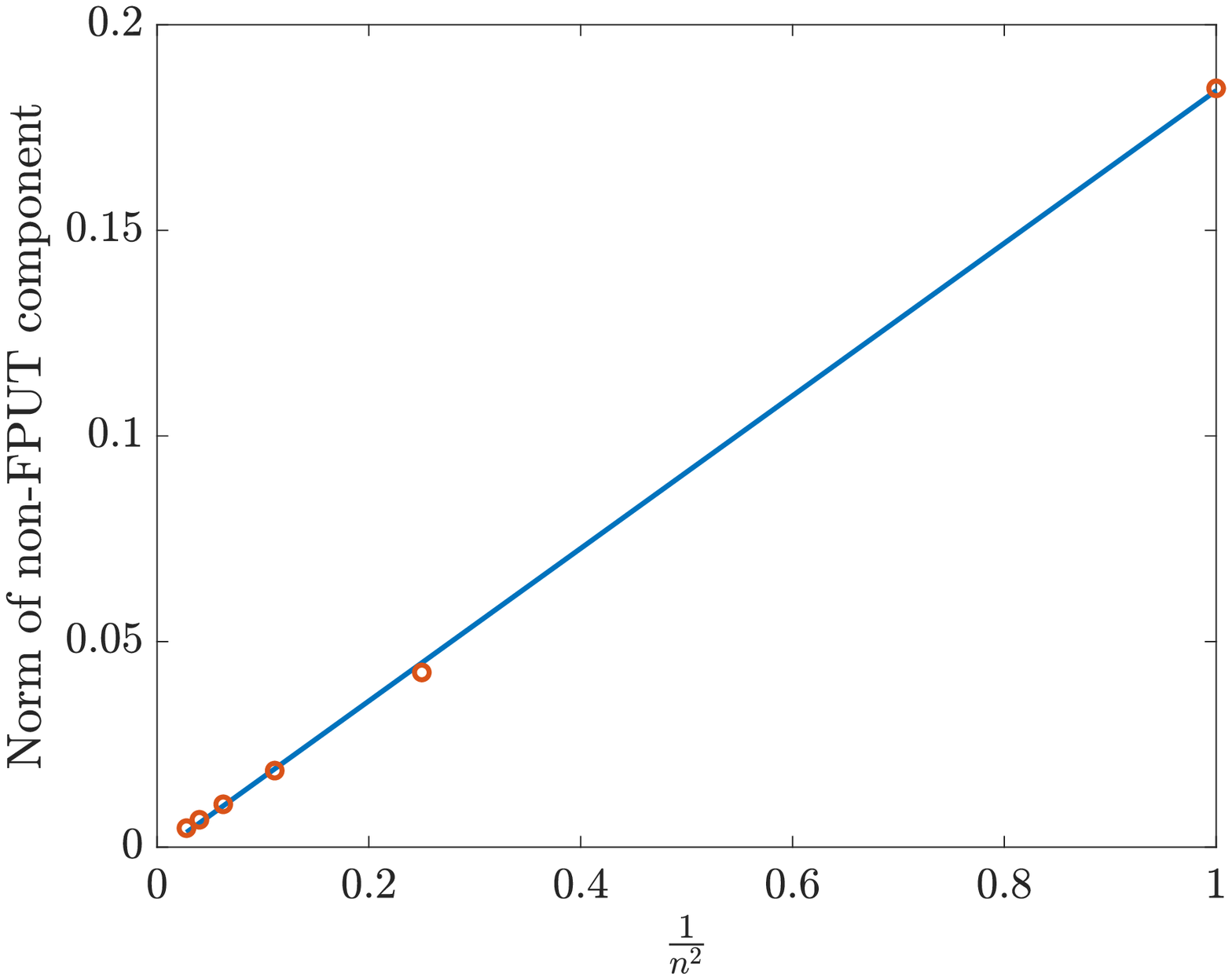} 
\caption{Left: Numerically obtained traveling waves solutions with $c=1.25$, $\mu=0.4$ and $n=3,2,1$ (in blue, red and yellow), along with the limiting FPUT traveling wave $\sigma$ (black dashed). 
Right: A plot of $\| \sigma - \rho_1\|_{L^2} + \|\rho_2\|_{L^2}$ vs.\@ $1/n^2$, showing rapid convergence of solutions.}
\label{fig:mu0p4}
\end{figure}

\begin{figure}[htbp] 
\centering
\includegraphics[width=.5\textwidth]{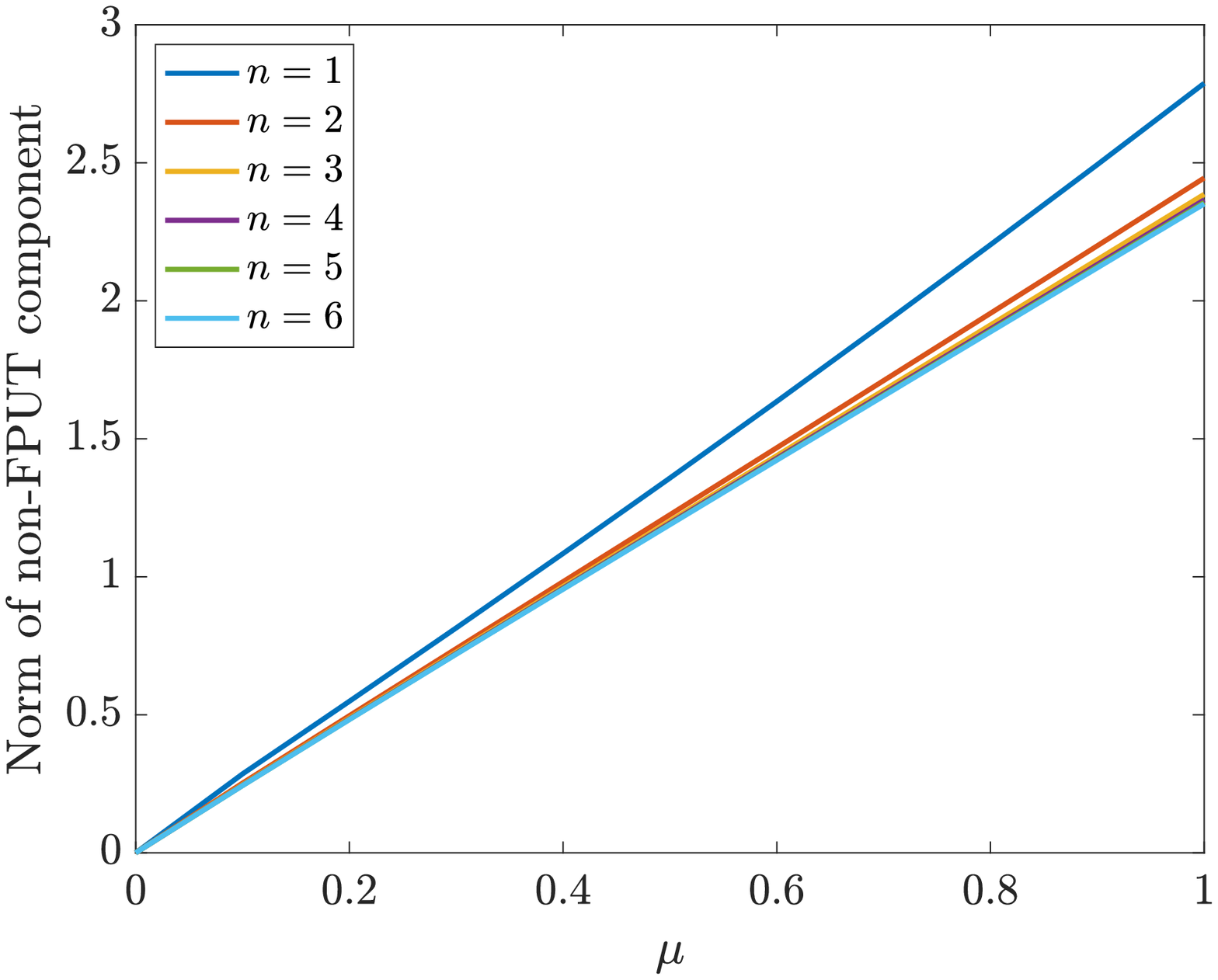}
\caption{The approach of the solutions to the FPUT traveling wave as $\mu \to 0^+$ along the curves of Figure~\ref{fig:kappa_mu}.}
\label{fig:mu_vs_d_nVaries}
\end{figure}


We have run several numerical experiments that show that if the traveling wave is unstable, then the instability must quite subtle. The simplest such experiment is simply to simulate the time-dependent dynamics using an initial condition formed by multiplying the exact traveling wave profile by $1 +\epsilon$ where $|\epsilon| \ll 1$ and may take either sign. As the traveling waves lie on a one-parameter family whose wave speed increases as a function of the norm, choosing $\epsilon>0$ results, to leading order, in a slightly faster traveling wave, and, similarly, $\epsilon<0$ leads to a slower wave. For $\epsilon = \pm 10^{-2}$, we observe traveling waves that at first glance generate no visible radiation, though
 upon zooming in to the neighborhood of the initial disturbance, i.e.\ near the site $n=0$, we find that after the traveling wave escapes from this neighborhood, it leaves behind a small disturbance that slowly disperses over time; this is likely a transient effect and not indicative of instability.
However after zooming in sufficiently close on the region immediately behind the traveling wave, we find a {\it very} small ``oscillatory wake.'' This wake is generated at each lattice site and therefore slowly drains energy (which is, of course, conserved for the system) from the solitary wave.  As such this will cause a slow, but
 substantial, attenuation of the solitary wave's amplitude.
One such simulation is shown in Figure~\ref{fig:stability}. Additional simulations using different parameters, not shown here, display an identical behavior.  By contrast, Figure~\ref{fig:fput_R32} shows the result of the analogous  experiment for the FPUT system, i.e. equation~\eqref{UrMiM} with $\mu=0$. This simulation lacks the sinusoidal oscillations in the wake of the traveling wave. It is our point of view that the oscillatory wake is evidence of a very weak and purely nonlinear instability for the antiresonant waves. Similar phenomena is known to occur in the diatomic FPUT lattice \cite{lustri, GSWW} as well as other problems which have ``embedded solitons''~\cite{benilov,tan-yang-pelinovsky}. A more detailed rigorous stability analysis is beyond the scope of this current paper, but certainly of great interest.

\begin{figure}[htbp] 
   \centering
   \includegraphics[width=0.6\textwidth]{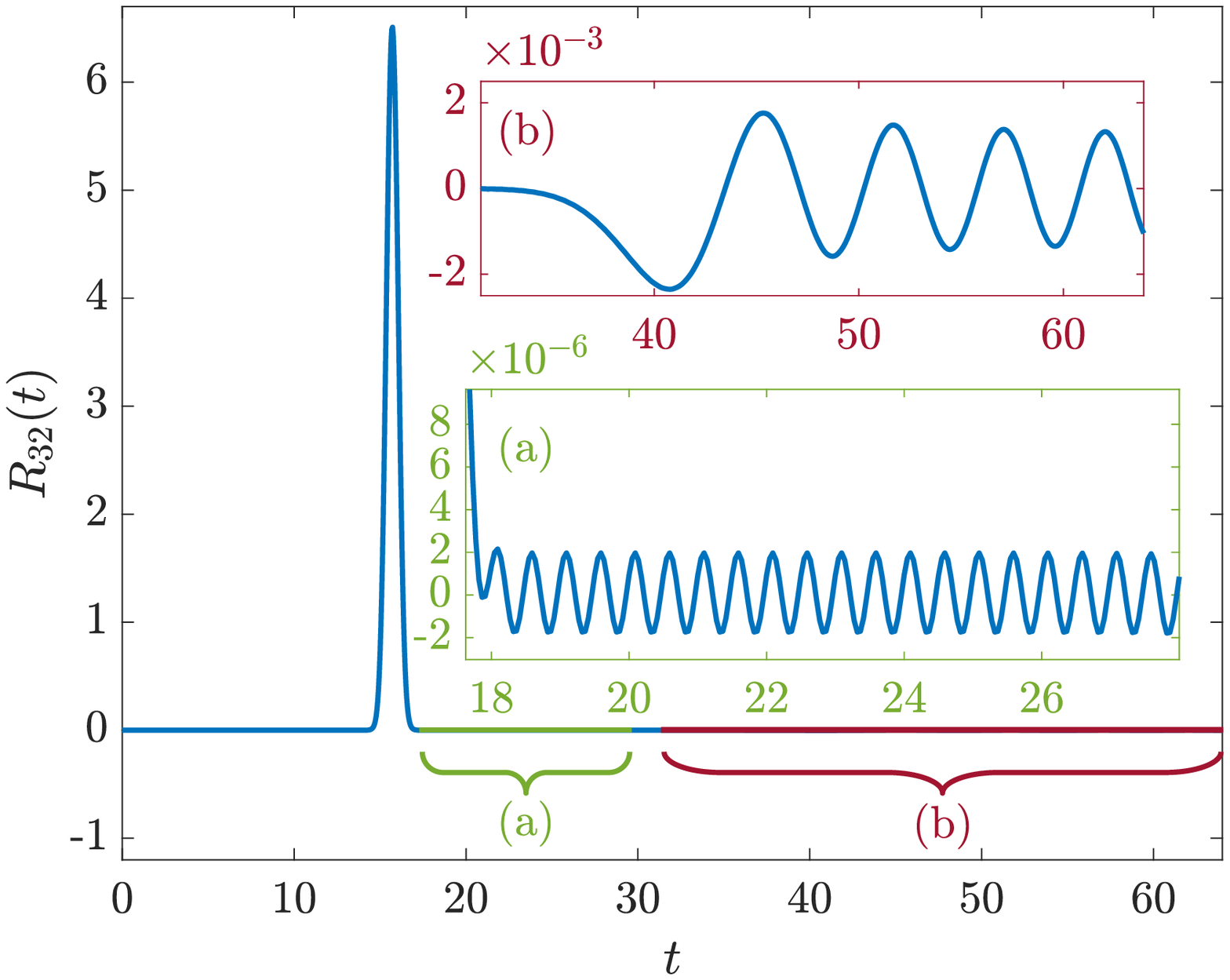} 
   \caption{Numerical results of time-dependent simulation initiated with traveling-wave initial condition multiplied by 1.01, showing the value of $R_{32}$ as a function of time; parameters as in Fig.~\ref{fig:Rjt}. In the unmagnified view, only the traveling wave is visible, which passes at about $t=16$. Inset (a) shows a zoomed view immediately after the passage of traveling wave, displaying the oscillatory wake. Inset (b) shows a zoom somewhat later, showing the arrival of the dispersing disturbance that spreads out from a neighborhood of the initial location of the wave.}
   \label{fig:stability}
\end{figure}

\begin{figure}[htbp] 
   \centering
   \includegraphics[width=0.6\textwidth]{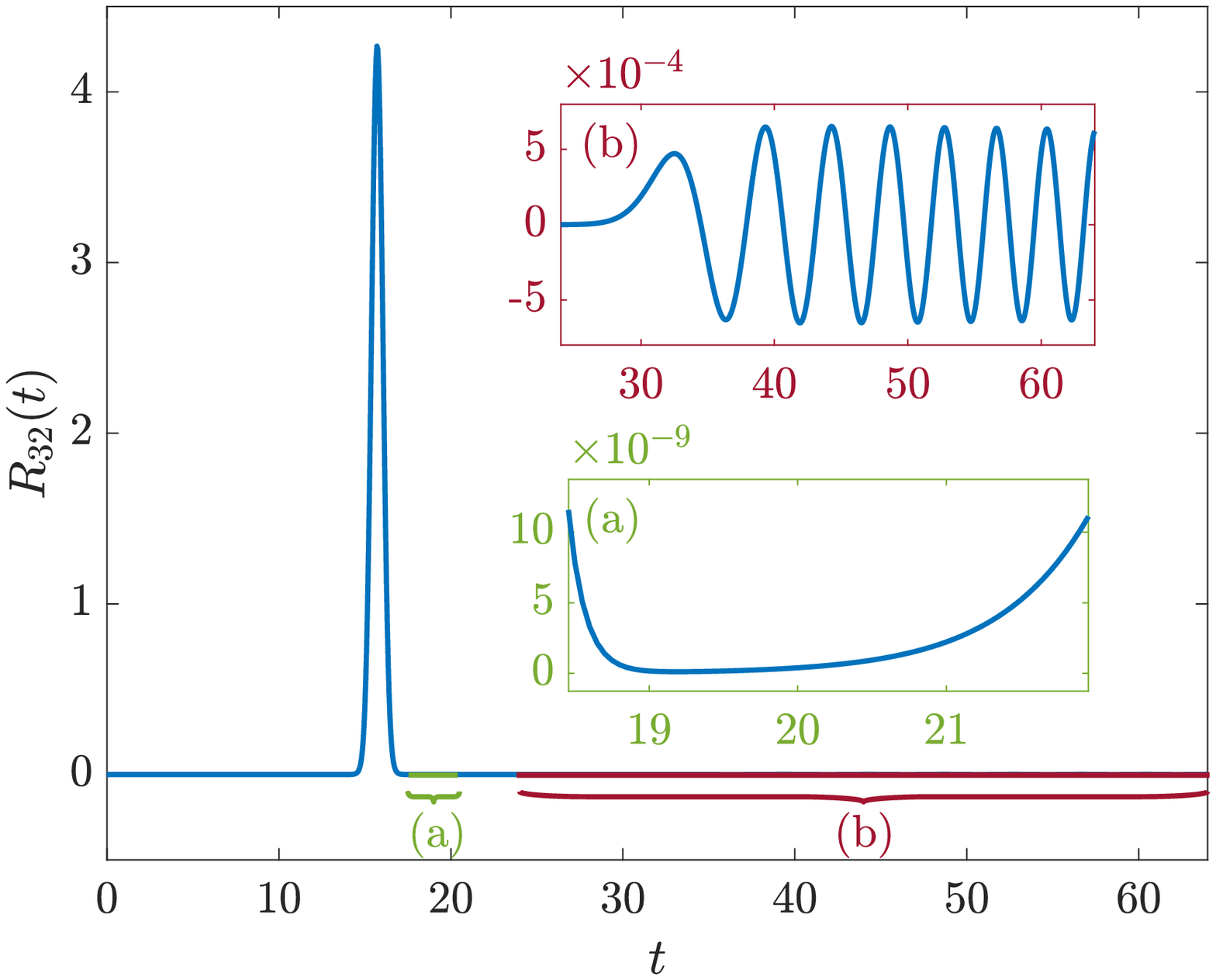} 
   \caption{Repeat of the numerical experiment summarized in Fig.~\ref{fig:stability} for the FPUT system with $\mu=0$, showing no oscillatory wake behind the propagating wave.}
   \label{fig:fput_R32}
\end{figure}

\bibliographystyle{siam}
\bibliography{mim-solitary-bib}{}
\end{document}